\theoremstyle{plain}
\newtheorem{thm}{Theorem}[section]
\newtheorem{theorem}[thm]{Theorem}
\newtheorem{lemma}[thm]{Lemma}
\newtheorem{corollary}[thm]{Corollary}
\newtheorem{proposition}[thm]{Proposition}
\newtheorem{problem}[thm]{Problem}
\theoremstyle{definition}
\newtheorem{remark}[thm]{Remark}
\newtheorem{definition}[thm]{Definition}
\newtheorem{claim}[thm]{Claim}
\newtheorem{question}[thm]{Question}
\numberwithin{equation}{section}
\newcommand{\sO}{{\mathcal O}}
\newcommand{\C}{{\mathbb C}}
\newcommand{\Q}{{\mathbb Q}}
\newcommand{\R}{{\mathbb R}}
\newcommand{\Z}{{\mathbb Z}}
\newcommand{\PP}{\ensuremath{\mathbb{P}}}
\title [Homeomorphic 4-folds]{On the homeomorphism type of smooth projective fourfolds}
\author{Keiji Oguiso}
\author{Thomas Peternell}
\address{Mathematical Sciences, the University of Tokyo, Meguro Komaba 3-8-1, Tokyo, Japan and Korea Institute for Advanced Study, Hoegiro 87, Seoul, 
133-722, Korea}
\email{oguiso@ms.u-tokyo.ac.jp}
\address{Mathematisches Institut, Universit\"at Bayreuth, 95440 Bayreuth, Germany}
\email{thomas.peternell@uni-bayreuth.de}
\thanks{The first named author is supported by the ERC 2013 Advanced Research Grant - 340258 - TADMICAMT, JSPS Grant-in-Aid (S) No 25220701, JSPS Grant-in-Aid (S) 15H05738, JSPS Grant-in-Aid (B) 15H03611, and KIAS Scholar Program. }
\subjclass[2010]{ 14F45, 14J35, 14J32, 14J45, 14J40}
\begin{document}

\maketitle
\tableofcontents

\begin{abstract}   In this paper we study smooth complex projective $4$-folds which are topologically equivalent. 
First we show that Fano fourfolds are never oriented homeomorphic to Ricci-flat projective fourfolds and that Calabi-Yau manifolds and hyperk\"ahler manifolds in dimension $\ge 4$ are never oriented homeomorphic. Finally, we give a coarse classification of  
smooth projective fourfolds which are oriented homeomorphic to a hyperk\"ahler fourfold which is deformation equivalent to the Hilbert scheme $S^{[2]}$ of two points of a projective K3 surface $S.$ 
\end{abstract}

\section{Introduction }

Throughout this note, we work in the category of complex projective manifolds. A projective manifold of complex dimension $n$  will shortly be called an $n$-fold.

Whenever we speak of the topology of projective manifolds $X$ and $Y,$ we use the Euclidean topology of $X$ and $Y$ equipped with the  natural orientation coming from the complex structure.

\begin{definition} 
The manifolds $X$ and $Y$ are called {\it o-homeomorphic} if there is an {\it orientation preserving} homeomorphism $\varphi : X \to Y.$  Note then that $\dim\, X = \dim\, Y.$
\end{definition} 

 Similarly one defines the notions of o-diffeomorphy  (of class $\mathcal C^{\infty}$ always in this note) and o-homotopy. By definition, o-diffeomorphy  implies o-homeomorphy
  and o-homeomorphy implies o-homotopy. We note that the intersection numbers are preserved under any o-homeomorphisms (but not necessarily under homeomorphisms);
 this is the main reason to consider o-homeomorphisms rather than homeomorphisms. 

 The main question we consider is the following natural

 \begin{problem} \label{mainproblem}  Let $X$ be a complex projective manifold. Describe all smooth complex projective structure on the underlying oriented topological manifold  of $X,$ i.e. study complex projective
 manifolds $Y$ o-homeomorphic to $X.$ 
 
 \end{problem} 
 
 More generally, one may ask for all {\it complex structures} on $X.$ However we will not discuss non-projective complex structures in this paper at all.

 The answer is of course well-known  in dimension $n = 1$; $X$ and $Y$ are o-homeomorphic if and only if the genus $h^1(\sO_X)$ and $h^1(\sO_Y)$ are the same. In dimension $n \ge 2,$ this problem has a 
 long history, at least since Hirzebruch and Kodaira \cite{HK57}. However, the complete answer, even under the assumption that $Y$ is projective, seems to be known only when $X = \PP^n,$ the projective space of complex dimension $n$ (Hirzebruch-Kodaira \cite{HK57}, Yau \cite{Ya77}), an odd dimensional smooth quadric hypersurface (Brieskorn, \cite{Br64}), abelian varieties (Catanese, \cite{Ca04}) and simply connected smooth surfaces (Friedman \cite{Fr82}) by now. Remarkably, Libgober-Woods \cite{LW90} showed that a compact K\"ahler manifold of dimension at most $6$ which is homotopy equivalent to {\rm $\mathbb P^n$} is biholomorphically equivalent
 to {\rm $\mathbb P^n$}. There are many important results for varieties with large fundamental group, e.g., by Mostow, Siu, Jost-Yau, Catanese, Bauer-Catanese. We refer to the excellent survey paper by to Catanese \cite{Ca15} for any details.

The case of simply connected threefolds seems already very difficult except above mentioned cases (see eg. \cite{Na96},  \cite{CP94}, \cite{Kol91} and references therein).  Indeed, this note is much inspired by the following very interesting but highly difficult questions asked by I. Nakamura \cite[Page 538]{Na96} and the second named author, \cite{CP94}, which are completely open even now:
\begin{question}\label{q1}
\begin{enumerate}
\item Is there a Calabi-Yau 3-fold $Y$ which is o-homeomorphic to a smooth cubic $3$-fold $X = (3)  \subset \PP^4$?  
\item Is there a Calabi-Yau 3-fold $Y$ which is o-homeomorphic to $X = (2) \cap (2)  \subset \PP^5,$ a smooth complete intersection of  two smooth quadratic hypersurfaces in $\PP^5$?  
\end{enumerate}
\end{question}

Notice that in dimension at least three, the Kodaira dimension is not a diffeomorphism invariant of compact K\"ahler manifolds. This was first observed by Catanese-LeBrun \cite{CL97}, and extended by
 Rasdeaconu \cite{Ra06}.  
In the present paper we address this circle of problems and questions {\it in dimension $4$} and higher; see Theorems \ref{cor:thm1}, \ref{xxx},  \ref{thm2}, \ref{thm3}, \ref{thm4}, \ref{thm5}, \ref{thm1}.
In some aspects, the problem is more tractable for $4$-folds than $3$-folds, as the middle Betti cohomology group $(H^4(X, \Z), (*,**)_X)$ with intersection form $(*, **)_X$ has a rich structure encoding non-trivial algebro-geometric and lattice theoretic informations, and not only the first Pontrjagin class, but also the second Pontrjagin class gives some non-trivial constraints on Chern classes. Moreover, the Riemann-Roch formula for the holomorphic Euler characteristic on a $4$-fold $X$ includes the topological term $c_4(X),$ whereas in dimension $3,$ the Chern class $c_3(X) $ does not appear in Riemann-Roch.

Recall a smooth projective $n$-fold $X$ with ample anti-canonical class $-K_X$ is said to be a {\it Fano $n$-fold}. 
We denote by $r_X$ the Fano index of $X,$ i.e., the largest integer $r$ such that the canonical bundle $K_X$ is divisible by $r$ in the 
Picard group ${\rm Pic}(X).$ 

We call a smooth projective $n$-fold $X$ a {\it Calabi-Yau $n$-fold} (resp. a {\it hyperk\"ahler $n$-fold}) if $\pi_1(X) = \{1\},$ $H^0(\Omega_X^{k}) = 0$ for all integers $k$ such that $0 < k < n$ and $H^{0}(X, \Omega_X^n) = \C\omega_X$ with nowhere vanishing holomorphic $n$-form $\omega_X$ (resp. if $\pi_1(X) = \{1\},$ $H^0(\Omega_X^{2}) = \C \eta_X$ with everywhere non-degenerate holomorphic $2$-form $\eta_X$). Almost by definition, hyperk\"ahler manifolds are of even dimension.

Our first main result is the following

\begin{theorem} \label{cor:thm1} 
Let $X$ be Fano manifold of even dimension $n \geq 4.$ 
 If $n \geq 6,$ assume additionally that $b_2(X) = 1.$ Then $X$ cannot be o-homeomorphic to a compact K\"ahler manifold $Y$ with 
$c_1(Y) = 0.$ In particular, Fano $4$-folds are never o-homeomorphic to a Calabi-Yau  $4$-fold or a hyperk\"ahler $4-$fold.
\end{theorem}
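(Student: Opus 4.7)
My plan is to combine Novikov's theorem on the topological invariance of the rational Pontryagin classes with the Hirzebruch-Riemann-Roch theorem applied to both $X$ and $Y$, extract a numerical Chern-class identity on $X$, and then contradict it using a Bogomolov-Miyaoka-Yau-type inequality for Fano fourfolds.

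For any complex $n$-fold $M$ the Whitney-sum relation $c(T_M)\,c(\overline{T_M}) = 1 + p_1(M) + p_2(M) + \cdots$ gives, in dimension $4$,
\[
p_1(M) = c_1(M)^2 - 2\,c_2(M), \qquad p_2(M) = c_2(M)^2 - 2\,c_1(M)\,c_3(M) + 2\,c_4(M).
\]
By Novikov's theorem the $p_i$ are invariants of the underlying oriented topological manifold, and the same is true for $c_n = \chi_{\mathrm{top}}$. For an o-homeomorphism $\varphi\colon X \to Y$ with $c_1(Y)=0$, equating the pulled-back $p_1, p_2$ and using $c_4(X)=c_4(Y)$ yields
\[
\varphi^* c_2(Y) = c_2(X) - \tfrac{1}{2} c_1(X)^2 \in H^4(X,\Q), \qquad \bigl(\varphi^* c_2(Y)\bigr)^2 = c_2(X)^2 - 2\,c_1(X)\,c_3(X);
\]
squaring the first and matching with the second produces the key Chern-class identity
\[
c_1(X)^4 - 4\,c_1(X)^2\,c_2(X) + 8\,c_1(X)\,c_3(X) = 0. \qquad (\ast)
\]

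Next I would apply Hirzebruch-Riemann-Roch on both sides. Kodaira vanishing on the Fano $X$ gives $\chi(\sO_X)=1$, i.e.\ $-c_1^4 + 4\,c_1^2 c_2 + c_1 c_3 + 3\,c_2^2 - c_4 = 720$. Fano manifolds are simply connected (Kobayashi), so $Y$ is too; the Beauville-Bogomolov decomposition then forces $Y$ to be either an irreducible Calabi-Yau fourfold, an irreducible hyperk\"ahler fourfold, or $K3\times K3$, so that $k := \chi(\sO_Y) \in \{2,3,4\}$. Riemann-Roch on $Y$ reads $3\,c_2(Y)^2 - c_4(Y) = 720\,k$, which via the Pontryagin relations becomes $3\,c_2(X)^2 - 6\,c_1(X)\,c_3(X) - c_4(X) = 720\,k$ on $X$. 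Subtracting this from the Fano identity and eliminating $-c_1^4 + 4c_1^2 c_2$ via $(\ast)$ produces
\[
c_1(X) \cdot c_3(X) = 48\,(1-k) \leq -48 < 0.
\]

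The hard part will be contradicting this. I would use the Bogomolov-Miyaoka-Yau-type inequality
\[
c_1(X)^4 < 4\,c_1(X)^2\,c_2(X) \qquad (\dagger)
\]
for Fano fourfolds: when $T_X$ is $(-K_X)$-semistable it is a direct consequence of Bogomolov's inequality $(8\,c_2 - 3\,c_1^2)\cdot c_1^2 \geq 0$; in the remaining cases one treats the Harder-Narasimhan filtration of $T_X$ using Miyaoka's generic semipositivity of $\Omega^1_X$. Granted $(\dagger)$, rewrite $(\ast)$ as $8\,c_1\,c_3 = 4\,c_1^2 c_2 - c_1^4 > 0$, contradicting $c_1 \cdot c_3 \leq -48$. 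The main technical point is establishing $(\dagger)$ uniformly over all Fano fourfolds. For $n \geq 6$ with $b_2(X)=1$, writing $-K_X = r_X H$ with $H$ an ample generator of $\mathrm{Pic}(X)$ reduces all the Pontryagin and Riemann-Roch relations to identities in the integers $H^i \cdot c_{n-i}(X)$; Kobayashi-Ochiai's bound on the Fano index together with the higher-dimensional analogue of $(\dagger)$ then closes the argument.
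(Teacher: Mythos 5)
Your reduction via Novikov invariance of $p_1,p_2$ and of $c_4$ to the relation $c_1(X)c_3(X)=48(1-k)$ with $k=\chi(\sO_Y)\in\{2,3,4\}$ is correct, but the step you yourself flag as ``the main technical point'' is a genuine gap, and the tool you propose for it points in the wrong direction. The inequality $(\dagger)$, $c_1(X)^4<4\,c_1(X)^2c_2(X)$, is not a known theorem for arbitrary Fano fourfolds: Bogomolov's inequality gives it only when $T_X$ is $(-K_X)$-semistable, and your fallback for the unstable case --- Miyaoka's generic semipositivity of $\Omega_X^1$ --- applies precisely to \emph{non-uniruled} varieties, whereas every Fano manifold is uniruled (the restriction of $\Omega_X^1$ to a free rational curve always has a negative quotient). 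So the unstable case is not handled, and the $n\ge 6$ statement is only gestured at in the last sentence.

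The irony is that your system of equations is exactly one Riemann--Roch identity short of closing without any stability input. The missing relation is $\chi(X,K_X/2)=0$: since $c_1(Y)=0$ forces $w_2(Y)=0$, hence $w_2(X)=0$, and $X$ is simply connected, one has $K_X=2L_X$ in ${\rm Pic}(X)$ (Corollary \ref{cor11}), and Kodaira vanishing plus Serre duality give $\chi(X,L_X)=(-1)^nh^0(X,L_X)=0$. On the other hand $\chi(M,K_M/2)$ is the $\hat A$-genus, a universal polynomial in Pontrjagin numbers, hence equal for $X$ and $Y$; as $K_Y$ is trivial this yields $\chi(Y,\sO_Y)=\chi(X,L_X)=0$, flatly contradicting $k\ge 2$. (Equivalently: adding the equation $\chi(X,L_X)=0$ to your list forces $c_1c_3=48$, hence $k=0$.) This is precisely the paper's route via Proposition \ref{campana} and Theorem \ref{thm1}: it needs no BMY-type inequality, and it treats all even $n\ge 4$ uniformly --- for $n=4$ the Beauville--Bogomolov decomposition would need an odd-dimensional Calabi--Yau factor to make $\chi(\sO_Y)=0$, which cannot fit into dimension $4$, while for $n\ge 6$ it forces two positive-dimensional factors and hence $b_2(Y)\ge 2$, against $b_2(X)=1$.
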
 

Theorem \ref{cor:thm1}  will be a consequence of a more general, result, Theorem \ref{thm1}, see Section \ref{s2}. 
 It is clear that Fano $n$-folds  ($n \ge 1$) are not homeomorphic to an abelian variety, as  a Fano $n$-fold is simply connected.

 We also remark that Catanese-LeBrun \cite{CL97} have shown that for every even integer $n \geq 4$ there is a Fano manifold $X$ of dimension $n$ which is o-diffeomorphic to a projective manifold $Y$  of general type; see Remark \ref{qu3} below.

Note that a smooth cubic $4$-fold is Fano  with second Betti number $b_2 = 1.$ Cubic $4$-folds have attracted much attention these days, especially in connection with the rationality problem (\cite{Ku10}) and its miraculous relations with hyperk\"ahler $4$-folds due to an observation by Beauville-Donagi (\cite{BD85}). So, it is certainly of interest to take a closer look at a smooth cubic $4$-folds also from a topological point of view, or, more generally to del Pezzo  $4$-folds. Recall that a del Pezzo $n$-fold is a Fano 
$n$-fold $X$ such that $r_X = n-1.$ 

In this direction, we first obtain the following, actually in all dimensions

\begin{theorem} \label{xxx} 
Let $X$ and $Y$ be o-homeomorphic Fano manifolds of dimension $n$ with $b_2(X) = b_2(Y)  = 1.$  Then 
\begin{enumerate}
\item $r_X = r_Y$ and $c_1(X)^n = c_1(Y)^n.$
\item If in addition $X$ is a del Pezzo manifold, then $X$ and $Y$ are deformation equivalent. 
\end{enumerate} 
\end{theorem}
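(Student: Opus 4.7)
The plan for (1) is to show that the Hilbert polynomial of $X$ is, up to a shift by $r_X/2$, a topological invariant of the oriented homeomorphism type, and then to recover $r_X$ from the location of its Kodaira-vanishing zeros. Write $c_1(X) = r_X H_X$ with $H_X$ the ample generator of $\mathrm{Pic}(X) = \mathbb{Z}$, and similarly for $Y$. Any o-homeomorphism $\varphi : X \to Y$ satisfies $\varphi^* H_Y = \epsilon H_X$ with $\epsilon \in \{\pm 1\}$; for $n$ odd, positivity of $H_X^n,H_Y^n$ forces $\epsilon = +1$, while for $n$ even $\epsilon^n = 1$ automatically. Either way $H_X^n = H_Y^n$.

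By the classical identity $\mathrm{Td}(X) = e^{c_1(X)/2}\hat A(X)$ and Hirzebruch--Riemann--Roch,
\[
P_X(m) := \chi(X, \mathcal{O}_X(mH_X)) = \int_X e^{(m + r_X/2)H_X}\hat A(X) = \tilde P_X(m + r_X/2),
\]
where $\tilde P_X(t) := \int_X e^{tH_X}\hat A(X)$. Since $\hat A(X)$ is a polynomial in the rational Pontrjagin classes of $X$, Novikov's theorem gives $\varphi^*\hat A(Y) = \hat A(X)$, and hence $\tilde P_Y(t) = \tilde P_X(\epsilon t)$. Serre duality $P_X(-r_X - m) = (-1)^n P_X(m)$ translates into $\tilde P_X(-u) = (-1)^n \tilde P_X(u)$, so $\tilde P_X$ is even for $n$ even and odd for $n$ odd; combined with the observation above, this gives $\tilde P_X = \tilde P_Y =: \tilde P$ in all cases.

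Kodaira vanishing on the Fano $X$ gives $\chi(\mathcal{O}_X) = 1$ and $\chi(\mathcal{O}_X(-kH_X)) = 0$ for $1 \leq k \leq r_X - 1$, so
\[
\tilde P(r_X/2) = 1, \qquad \tilde P(r_X/2 - k) = 0 \quad (1 \leq k \leq r_X - 1),
\]
and similarly for $r_Y$. The topological invariance of $w_2 = c_1 \bmod 2$ forces $r_X \equiv r_Y \pmod 2$. If $r_X < r_Y$, then $j := (r_Y - r_X)/2$ is a positive integer in $\{1, \ldots, r_Y - 1\}$, and the $Y$-zero condition gives $\tilde P(r_X/2) = \tilde P(r_Y/2 - j) = 0$, contradicting $\tilde P(r_X/2) = 1$. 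Hence $r_X = r_Y$, and $c_1(X)^n = r_X^n H_X^n = r_Y^n H_Y^n = c_1(Y)^n$.

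For (2), once $r_Y = r_X = n-1$ and $H_Y^n = H_X^n$, the manifold $Y$ is a del Pezzo $n$-fold of the same degree as $X$. Fujita's classification of del Pezzo manifolds with $b_2 = 1$ realizes each pair $(n,d)$ with $d \in \{1,2,3,4,5\}$ by a single irreducible family (a sextic double solid, a quartic double solid, a cubic hypersurface in $\PP^{n+1}$, an intersection of two quadrics in $\PP^{n+2}$, or a linear section of $G(2,5)\subset\PP^9$), so $X$ and $Y$ lie in the same family and are deformation equivalent. The main delicate point I expect is the identification $\tilde P_X = \tilde P_Y$ in the second step, requiring careful bookkeeping of the signs $\epsilon^{n-2k}$ via Serre duality and the parity of $n$; once $\tilde P$ is isolated as a topological invariant, the combinatorial extraction of $r$ from its zeros is immediate.
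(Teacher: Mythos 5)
Your proposal is correct and takes essentially the same route as the paper: your identity $\tilde P_X=\tilde P_Y$ is precisely the paper's Proposition \ref{comparechern} (the Hirzebruch--Kodaira comparison $\chi(Y,mL_Y)=\chi(X,(m+s)L_X)$ with $s=(r_Y-r_X)/2$, obtained from ${\rm Td}=e^{c_1/2}\hat A$ and Novikov's theorem), and your contradiction $\tilde P(r_X/2)=1$ versus $\tilde P(r_X/2)=\chi(Y,-jH_Y)=0$ is the mirror image of the paper's single evaluation $1=\chi(Y,\mathcal O_Y)=\chi(X,sL_X)=(-1)^n h^0(X,(-r_X-s)L_X)=0.$ Part (2) via Fujita's classification is identical to the paper's argument.
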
 

In the case of cubic $4$-folds, more is true. 

\begin{theorem} \label{thm2} Assume that a smooth projective $4$-fold $Y$ is o-homeomorphic to a smooth cubic $4$-fold $X.$ Then 
\begin{enumerate}
\item Either $Y$ is deformation equivalent to $X$ or $Y$ is a smooth projective $4$-fold with ample $K_Y$ and with the same Hodge numbers as a smooth cubic $4$-fold. 
\item Suppose that $Y$ is log o-homeomorphic to $X.$ Then $Y$ is isomorphic to a smooth cubic $4$-fold. 
\end{enumerate}
\end{theorem}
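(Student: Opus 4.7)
The plan is to use $b_2(Y)=b_2(X)=1$ to arrange a trichotomy. An o-homeomorphism $\varphi\colon Y\to X$ to a cubic fourfold transports Betti numbers and signature, so $Y$ is simply connected with $b_2=1$, $b_3=0$, $b_4=23$ and $\sigma(Y)=19$. Thus $\mathrm{Pic}(Y)=\mathbb{Z}\cdot H_Y$ with $H_Y$ ample, and $K_Y=-\alpha H_Y$ for some $\alpha\in\mathbb{Z}$. The case $\alpha=0$ is ruled out by Theorem \ref{cor:thm1}; the Fano case $\alpha>0$ falls into Theorem \ref{xxx}, giving $r_Y=r_X=3$, so $Y$ is del Pezzo and deformation equivalent to $X$, producing the first alternative. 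The real work is the ample-canonical case $\alpha<0$.

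\textbf{Matching Hodge numbers when $K_Y$ is ample.} Simple connectedness and the Betti numbers already pin down $h^{p,0}=0$ for $p=1,2,3$ and $h^{1,1}=1$; the signature identity $b_4-\sigma=4h^{3,1}$ forces $h^{3,1}=1$ and $h^{2,2}=21-2h^{4,0}$. Matching the Hodge diamond of a cubic fourfold therefore reduces to $\chi(\sO_Y)=1$. I would feed into Hirzebruch--Riemann--Roch the constraints transferred by $\varphi^{\ast}$: $\varphi^{\ast}h=\pm H_Y$ yields $H_Y^4=3$, and topological invariance of Pontryagin classes gives $p_1(Y)=-3H_Y^2$, $p_2(Y)=126$, $c_4(Y)=27$. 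The class $h^2\in H^4(X,\mathbb{Z})$ is primitive (as $(h^2)^2=3$ is square-free), hence so is $H_Y^2\in H^4(Y,\mathbb{Z})$, and integrality of $c_2(Y)=(c_1(Y)^2-p_1(Y))/2=\tfrac{\alpha^2+3}{2}H_Y^2$ forces $\alpha$ odd. Substituting these explicit Chern numbers into the Todd genus formula in dimension four collapses to
\[
\chi(\sO_Y)=\frac{(\alpha^2+7)(\alpha^2-1)}{128}.
\]

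Combined with the Hodge-theoretic bound $1\leq\chi(\sO_Y)\leq 11$ (the upper bound from $h^{2,2}\geq 0$), this restricts the odd square $\alpha^2$ to $\{9,25\}$. The value $\alpha^2=25$ is excluded by integrality of $c_3(Y)$: the relation $p_2=c_2^2-2c_1c_3+2c_4$ then gives $c_1(Y)c_3(Y)=258$, whence $H_Y\cdot c_3(Y)=258/\alpha=\pm 258/5\notin\mathbb{Z}$, a contradiction. Hence $\alpha=-3$, $\chi(\sO_Y)=1$, $h^{4,0}=0$, and the Hodge numbers of $Y$ agree with those of a smooth cubic fourfold.

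\textbf{Part (2).} The log o-homeomorphism hypothesis is designed to preserve the canonical class itself, not merely its Pontryagin trace, so that $c_1(Y)=\varphi^{\ast}c_1(X)=3H_Y$ for a consistent sign choice. This pins $\alpha=+3$ and excludes the ample-$K_Y$ alternative of Part~(1). Thus $Y$ lies in the Fano branch, which by Part~(1) and Theorem \ref{xxx} places it in the deformation class of smooth cubic fourfolds; in particular $Y$ is itself a smooth cubic fourfold. The hardest step I anticipate is the final integrality argument in Part~(1): topological data alone only narrows $\alpha^2$ to the two values $\{9,25\}$, and eliminating the borderline value $25$ requires exploiting the lattice-level structure of $c_3(Y)$ transported along $\varphi^{\ast}$, rather than any purely homotopy-theoretic invariant.
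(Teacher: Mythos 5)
Your Part (1) is essentially the paper's own argument: the same trichotomy on the sign of $K_Y$ against the ample generator, the same use of Pontrjagin invariance to get $c_2(Y)=\frac{\alpha^2+3}{2}H_Y^2$ and $c_1(Y)c_3(Y)$, the same Riemann--Roch identity $\chi(\sO_Y)=\frac{1}{128}\bigl((\alpha^2+3)^2-16\bigr)$ bounded by $h^{2,2}\ge 0$, and the same divisibility-by-$5$ exclusion of $\alpha^2=25$. (Your route to oddness of $\alpha$ via primitivity of $H_Y^2$ is a small, valid variant; your sign conventions for $p_1$ are the opposite of the paper's but internally consistent.)

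Part (2), however, has a genuine gap. You assert that the log o-homeomorphism hypothesis ``is designed to preserve the canonical class itself,'' i.e.\ that $c_1(Y)=\varphi^*c_1(X)$. Nothing in the definition supports this: log o-homeomorphy only provides smooth divisors $D_X\subset X$, $D_Y\subset Y$ with $\varphi(D_X)=D_Y$, which forces $D_Y\in|dL|$ when $D_X\in|dH_X|$, but says nothing a priori about how $\varphi^*$ relates the canonical classes --- indeed, if it did, Part (2) would be immediate and the whole ample-$K_Y$ branch of Part (1) would never need to be excluded. The actual content of the paper's proof is to extract a new topological obstruction from the divisor pair: since $\varphi|_{D_X}:D_X\to D_Y$ is an o-homeomorphism of threefolds, the Euler numbers $c_3(D_X)$ and $c_3(D_Y)$ must agree. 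Computing both via the normal bundle sequences, using $c_*(X)=1+3H_X+6H_X^2+2H_X^3$ and (from Part (1), in the ample-$K_Y$ case) $c_*(Y)=1-3L+6L^2-2L^3$, one finds $c_3(D_X)=3d(2-6d+3d^2-d^3)$ while $c_3(D_Y)=3d(-2-6d-3d^2-d^3)$; equality would force $2+3d^2=0$, which is absurd. This contradiction is what rules out ample $K_Y$, after which Fujita's classification of del Pezzo fourfolds of degree $3$ identifies $Y$ as a cubic. Your proposal skips this computation entirely by assuming its conclusion; relatedly, your closing remark that the elimination of $\alpha^2=25$ is the hardest step misjudges where the real work lies.
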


The second part of Theorem \ref{thm2} uses the new notion of log o-heomeomorphy; the relevant definition is

\begin{definition} Let $X$ and $Y$ be smooth projective $n$-folds. $X$ and $Y$ are said to be  {\it log o-homeomorphic}, if there exists an o-homeomorphism $\varphi: X \to Y$ and smooth divisors $D_X \subset  X,$
$D_Y \subset Y$ such that $\varphi(D_X) = D_Y.$ We then also say that the pairs $(X,D_X)$ and $ (Y,D_Y)$ are o-homeomorphic. 

\end{definition} 

\begin{remark}\label{qu3} 

It may be quite difficult but certainly interesting to see whether there really exists an o-homeomorphism $\varphi : X \to Y$ with ample $K_Y$  from a Fano manifold  $X$ with $b_2(X) = 1.$ Theorem \ref{thm2} (2) shows that in this case $\varphi$ does not preserve any smooth divisor  if $X$ is a cubic $4$-fold. 

\end{remark}

Of course, we may ask whether Theorem \ref{thm2} holds for other Fano  $4$-folds,  e.g., other del Pezzo  $4$-folds. We treat the case of del Pezzo 4-folds of degree $5$ in Section \ref{s3a}. This section provides
also a result in any dimension.

By definition, both a Calabi-Yau $2$-fold  and a hyperk\"ahler $2$-fold are K3 surfaces and all K3 surfaces are deformation equivalent ( \cite[Theorem 13]{Ko64}), in particular, o-homeomorphic. Calabi-Yau manifolds and hyperk\"ahler manifolds are close in some sense. However, concerning topological structure, it turns out that they are different  in dimension greater than $2$: 

\begin{theorem}\label{thm3} Let $n\geq 2$ be an integer. 
\begin{enumerate} 
\item  A Calabi-Yau $2n$-fold and a hyperk\"ahler $2n$-fold can never be o-homeomorphic.
\item Assume $n$ is an even integer. Let $Y$ be a hyperk\"ahler  $2n$-fold and let $X$ be a compact K\"ahler manifold with $c_1(X) = 0$ o-homeomorphic to $Y.$ Then $X$ is again hyperk\"ahler.
\end{enumerate} 
\end{theorem}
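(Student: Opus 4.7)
The plan is to derive both parts from a single topological observation: for any compact K\"ahler manifold $M$ with $c_1(M)=0$, Hirzebruch--Riemann--Roch gives $\chi(\sO_M)=\int_M\mathrm{Td}(M)$, and the vanishing of $c_1$ collapses $\mathrm{Td}(M)$ to the $\hat{A}$-genus. Since $\hat{A}(M)$ is a rational polynomial in the Pontryagin classes, and by Novikov's theorem the rational Pontryagin classes of an oriented topological manifold are topological invariants, $\chi(\sO_M)$ is an o-homeomorphism invariant within the class of compact K\"ahler manifolds with $c_1=0$. I would use this as the main tool in both parts.

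For part (1), I would read off $\chi(\sO_X)=1+(-1)^{2n}=2$ directly from the Hodge numbers of a Calabi--Yau $2n$-fold, and $\chi(\sO_Y)=n+1$ from those of the hyperk\"ahler $2n$-fold $Y$ (the powers $\eta_Y^k$ for $0\le k\le n$ give $h^{0,2k}(Y)=1$, while all odd-degree $h^{0,\bullet}$ vanish). Since both $X$ and $Y$ satisfy $c_1=0$, the invariance above forces $2=n+1$, contradicting $n\ge 2$.

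For part (2), I first observe that $X$ is simply connected, since $Y$ is. The Beauville--Bogomolov decomposition theorem then gives
\[
X=\prod_{i=1}^{t}V_i\times\prod_{j=1}^{s}Z_j,
\]
with no torus factor, where each $V_i$ is a strict Calabi--Yau manifold of dimension $m_i$ and each $Z_j$ is an irreducible hyperk\"ahler manifold of dimension $2k_j$. Multiplicativity of $\chi(\sO)$ on products and the invariance just established yield
\[
\prod_{i=1}^{t}\bigl(1+(-1)^{m_i}\bigr)\cdot\prod_{j=1}^{s}(k_j+1)=\chi(\sO_X)=\chi(\sO_Y)=n+1.
\]
Because $n$ is \emph{even}, $n+1$ is odd. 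An odd-dimensional $V_i$ would kill the product, and an even-dimensional one would contribute a factor of $2$; both are excluded, so $t=0$.

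It remains to force $s=1$, which is the combinatorial heart of the argument. With $\sum_j k_j=n$ and $\prod_j(k_j+1)=n+1=1+\sum_j k_j$, the expansion
\[
\prod_{j=1}^{s}(1+k_j)=1+\sum_j k_j+\sum_{i<j}k_ik_j+\cdots
\]
shows that every elementary symmetric polynomial $e_\ell(k_1,\dots,k_s)$ with $\ell\ge 2$ must vanish. Since each $k_j\ge 1$, this is possible only when $s=1$, and then $X=Z_1$ is hyperk\"ahler. I anticipate the main technical point to be the clean invocation of the $\hat{A}$-genus identification together with Novikov's theorem to make $\chi(\sO_\bullet)$ topological under $c_1=0$; after that, Beauville--Bogomolov reduces the problem to the short multiplicativity and combinatorics computation above.
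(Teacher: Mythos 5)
Your proposal is correct and follows essentially the same route as the paper: the paper's Proposition \ref{campana}(1) (via Hirzebruch--Riemann--Roch and Novikov's theorem) is exactly your observation that $\chi(\sO_M)=\int_M\hat{A}(M)$ is an o-homeomorphism invariant when $c_1=0$, and both proofs then compare $\chi(\sO)=2$ versus $n+1$ for part (1) and combine Beauville--Bogomolov, multiplicativity of $\chi(\sO)$, the parity of $n+1$, and the same elementary-symmetric-function computation for part (2).
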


Our proof is extremely simple, but as far as the authors are aware, this is not noticed before. We prove Theorem \ref{thm3} in Section \ref{s4}. 

In the following we will need the notion of the numerical Kodaira dimension. Recall that the numerical Kodaira dimension $\nu(Y) := \nu(Y, K_Y)$ of a minimal projective $n$-fold $Y,$ hence $K_Y$ is nef, is defined by
$$\nu(Y) := \nu(Y, K_Y) := {\rm max}\, \{ k\,\, |\,\,  K_Y^k \not= 0 \in H^{2k}(Y, \Q)\,\, \}\,\, .$$
This notion is introduced by Kawamata \cite{Ka85}. In the same paper, he also shows that  $\nu(Y) \ge \kappa(Y)$  and the equality holds if and only if $K_Y$ is semi-ample. 

Let $S$ be a projective K3 surface. The Hilbert scheme $S^{[n]}$ of  $0$-dimensional closed subschemes of length $n$ of $S$ and their deformation $X$  are hyperk\"ahler $2n$-folds, even though $X$ is not projective in general. (\cite{Fu83}, \cite{Be83}, see also \cite[Section 21.2]{GHJ03}).

\begin{theorem}\label{thm4} Let $X$ be a  projective hyperk\"ahler  $4$-fold which is deformation equivalent to $S^{[2]}$ of a projective K3 surface $S.$

Let $Y$ be a smooth projective $4$-fold. Assume that $Y$ is o-homeomorphic to $X.$ Then: 

\begin{enumerate}

\item $h^0(Y, \sO_Y(K_Y)) = 1.$ In particular, $Y$ is not of Kodaira dimension $\kappa(Y) = -\infty.$ 

\item $K_Y$ is nef and  $\nu(Y)$ is either $0$ or $2.$  In particular, $Y$ is not of general type. 

\item If $\nu(Y) = 0,$ then $Y$ is a hyperk\"ahler $4$-fold. 


\end{enumerate}
\end{theorem}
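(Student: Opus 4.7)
The plan is to transfer topological invariants from $X$ to $Y$, use Verbitsky's cup-product isomorphism together with the Hirzebruch genera to pin down the Hodge diamond of $Y$, compute Chern numbers via Hirzebruch--Riemann--Roch, and then analyze the numerical Kodaira dimension via an inherited Beauville--Bogomolov form and Miyaoka--Yau.

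First I would record the standard invariants of $X = S^{[2]}$: $b_2=23$, $b_3=0$, $b_4=276$, $e=324$, $\sigma=156$, $\chi(\sO_X)=3$, $c_2(X)^2=828$, and Pontryagin numbers $p_1^2=3312$, $p_2=1476$. All are topological and so transfer to $Y$; in particular $\chi^1(Y):=\sum_q (-1)^q h^{1,q}(Y)=(\sigma-e)/4=-42$. From $b_1(Y)=b_3(Y)=0$ and Hodge symmetry, $h^{1,0}(Y)=h^{3,0}(Y)=h^{2,1}(Y)=0$, so $h^{1,1}(Y)+h^{1,3}(Y)=42$. By Verbitsky's theorem and the dimension count $23\cdot 24/2=276=b_4(X)$, cup product induces a topological isomorphism $S^2 H^2(X,\Q)\xrightarrow{\sim} H^4(X,\Q)$; the corresponding map on $Y$ is still an isomorphism (by topology) and Hodge-compatible (cup product respects Hodge structure). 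Comparing $(1,3)$-parts yields $h^{1,3}(Y)=h^{2,0}(Y)\cdot h^{1,1}(Y)$, which combined with $2h^{2,0}(Y)+h^{1,1}(Y)=23$ and $h^{1,1}(Y)+h^{1,3}(Y)=42$ gives the quadratic $2s^2-21s+19=0$ for $s:=h^{2,0}(Y)$, whose only non-negative integer root is $s=1$. Hence the Hodge diamond of $Y$ agrees with that of $X$; in particular $h^0(K_Y)=h^{4,0}(Y)=1$ and $\chi(\sO_Y)=3$, proving (1).

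Substituting $\chi(\sO_Y)=3$ and $c_4(Y)=324$ into Hirzebruch--Riemann--Roch together with the Pontryagin constraints yields $c_2(Y)^2=828$, $c_1(Y)c_3(Y)=0$, and the key identity $c_1(Y)^4=4c_1(Y)^2 c_2(Y)$. The topological identity $\int_X \alpha^4 = 3q(\alpha)^2$ (Fujiki) equips $H^2(Y,\Q)$ with a nondegenerate form $q$ of signature $(3,20)$ satisfying Fujiki's polarised identities $\int_Y \xi_1\xi_2\xi_3\xi_4 = q(\xi_1,\xi_2)q(\xi_3,\xi_4)+q(\xi_1,\xi_3)q(\xi_2,\xi_4)+q(\xi_1,\xi_4)q(\xi_2,\xi_3)$. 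Assuming $K_Y$ is nef, these identities applied to $\xi_i=K_Y$ and an arbitrary $\alpha$ rule out $\nu(Y)\in\{1,3\}$: each case forces $q(K_Y)=0$ and $q(K_Y,\alpha)=0$ for every $\alpha$, contradicting nondegeneracy. The case $\nu(Y)=4$ would mean $K_Y$ nef and big, so $Y$ is of general type, and Miyaoka--Yau gives $3c_2(Y)K_Y^2\geq K_Y^4=4c_2(Y)K_Y^2$, forcing $c_2(Y)K_Y^2\leq 0$, in contradiction with $c_2(Y)K_Y^2=K_Y^4/4>0$. Hence $\nu(Y)\in\{0,2\}$. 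For (3), $\nu(Y)=0$ combined with $h^0(K_Y)=1$ forces $K_Y\sim 0$, since any nonzero effective divisor has positive degree against an ample class; the Beauville--Bogomolov decomposition, applied to the simply connected $Y$ with $h^{2,0}(Y)=1$, then excludes every possible product of Calabi--Yau and hyperk\"ahler factors of smaller dimension (already $h^{2,0}(\text{K3}\times \text{K3})=2$ and $h^{2,0}(\text{CY}_4)=0$), so $Y$ itself must be hyperk\"ahler.

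The hard part will be showing that $K_Y$ is actually nef, not merely pseudo-effective (which is immediate from $h^0(K_Y)=1$). The plan is to rule out every $K_Y$-negative extremal contraction of $Y$ by showing it violates the rigid Chern and Hodge numerics just established, or equivalently to pass to a minimal model $Y'$ of $Y$ (which exists since $Y$ is non-uniruled), perform the $\nu$-analysis on $Y'$, and then argue that the birational map $Y\dashrightarrow Y'$ must be an isomorphism because any intermediate divisorial contraction or flip would alter topological or Chern-numerical invariants of $Y$ that we have pinned down.
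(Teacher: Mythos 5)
Everything you actually carry out coincides, up to cosmetic reorganization, with the paper's own proof: the Hodge diamond of $Y$ is pinned down via the topological isomorphism ${\rm Sym}^2H^2\to H^4$ together with one extra index-theoretic constraint (you use $\chi^1=(\sigma-e)/4=-42$, the paper uses the signature formula $\sigma=\sum(-1)^ph^{p,q}$; both give the same quadratic with roots $1$ and $19/2$), the identity $c_1(Y)^4=4c_1(Y)^2c_2(Y)$ comes from Riemann--Roch plus Pontrjagin invariance, general type is excluded by Miyaoka's pseudo-effectivity of $3c_2-c_1^2$, the cases $\nu\in\{1,3\}$ are excluded by the pulled-back Beauville--Bogomolov--Fujiki form (the paper instead quotes Verbitsky's description of the kernel of ${\rm Sym}^{\bullet}H^2\to H^{2\bullet}$, but this is the same mechanism), and part (3) follows from the Beauville--Bogomolov decomposition (you use $h^{2,0}(Y)=1$ where the paper uses $\chi(\sO_Y)=3$; both work).

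The genuine gap is the nefness of $K_Y$, which is the technical core of part (2) and which you only sketch as a ``plan.'' The paper's Proposition 7.4 derives nefness from four properties: $\kappa(Y)\ge 0$, $b_3(Y)=0$, $(E^4)_Y\ge 0$ for every divisor class, and --- crucially --- the \emph{$2$-divisibility of $K_Y$ in ${\rm Pic}(Y)$}, inherited from $w_2(S^{[2]})=0$ via the topological invariance of Stiefel--Whitney classes. Your proposal never identifies this divisibility, yet it is exactly what kills small contractions (Kawamata's exceptional $\PP^2$'s carry $K|_{\PP^2}=\sO(-1)$) and divisorial contractions onto a surface (the general fibre is a $\PP^1$ with $K\cdot F=-1$). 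Neither of your two fallback strategies works as stated: a single $K_Y$-negative extremal contraction does \emph{not} contradict the Chern or Hodge numerics you established --- the paper shows (using Takagi's classification, which again needs the $2$-divisibility) that such a contraction is necessarily a smooth blow-down along a rational curve to a smooth fourfold $Z_1$ satisfying the \emph{same} four hypotheses, and the contradiction only materializes at the last step of the resulting MMP chain, from $0\le (E_n^4)=\deg N_{C_n/Z_{n+1}}$ against adjunction $-2=K_{Z_{n+1}}\cdot C_n+\deg N_{C_n/Z_{n+1}}$ with $K_{Z_{n+1}}$ nef. Likewise, ``passing to a minimal model $Y'$'' founders because $Y'$ is a priori singular, so none of the smooth-manifold invariants you pinned down on $Y$ transfer to it; the whole point of the paper's argument is to show that every MMP step preserves smoothness. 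Without supplying the $2$-divisibility and the case-by-case analysis of extremal contractions, the nefness claim --- and hence part (2) --- remains unproved.
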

 
There are only two known classes of hyperk\"ahler $4$-folds, up to deformation equivalence. The other one are generalized Kummer $4$-folds and their deformations. Members  of these two
classes  cannot be homeomorphic, as the second Betti numbers are different, namely $7$ and $23$ (\cite{Be83}, see also \cite[Section 24.4]{GHJ03}). We prove Theorem \ref{thm4}  in Section \ref{s5}; however
we do not know whether varieties $Y$ with $\nu(Y) = 2$ in Theorem \ref{thm4}(2) really exist. 
We end the paper with some speculations on exotic structures on hyperk\"ahler manifolds, see Question \ref{que}.

{\bf Acknowledgement.} Most thanks of the first named author go to Fabrizio Catanese  for his invitation to Universit\"at Bayreuth with full financial support from the ERC 2013 Advanced Research Grant - 340258 - TADMICAMT. The main part of this work  started there. Very special thanks go to Fr\'ed\'eric Campana whose comments and remarks greatly helped to improve the paper. In particular this concerns Theorems \ref{thm1} and \ref{xxx}.
 We  would like to express our thanks to Fabrizio Catanese, Yujiro Kawamata, Yongnam Lee, Bong Lian and Kiwamu Watanabe for fruitful discussions. 

\section{Preliminaries from differential topology} \label{sprep}

First we recall basic definitions and properties of characteristic classes of projective manifolds, assuming the notion of Chern classes of projective manifolds.

Let $X$ be a projective manifold. Let $c_i(E) \in H^{2i}(X, \Z)$ be the $i$-th Chern class of a topological $\C$-vector bundle $E$ on $X.$ We denote by $c_i(X) := c_i(T_X)$ the $i$-th Chern class of $X.$ Here $T_X$ is the holomorphic tangent bundle of $X.$ We denote by $c_{*}(E)$ the total Chern class $\sum_{i \ge 0} c_i(E)$ 
and 
by $(c_*(E_1)c_*(E_2))_i \in H^{2i}(X, \Z)$ the degree $i$ part of the product $c_*(E_1)c_*(E_2)$ of the total Chern classes $c_*(E_i)$ of the topological $\C$-vector bundles $E_i$ ($i=1,$ $2$) on $X.$ Recall that $T_X$ has a natural underlying $\R$-vector bundle structure and we have  canonical isomorphisms:
$$T_X \otimes_{\R} \C \simeq T_X \oplus \overline{T_X} \simeq T_X \oplus T_X^{*}\,\, .$$
Here $\overline{T_X}$ is the $\C$-vector bundle with complex multiplication by the conjugate and $T_X^* \simeq \Omega_X^1$ is the dual holomorphic $\C$-vector bundle of the holomorphic $\C$-vector bundle $T_X.$ 

We call a non-degreasing sequence of positive integers $ \lambda = (\lambda_i)_{i=1}^{k}$ a partition of a positive integer $n$ if $\sum_{i=1}^{k} \lambda_i = n.$ 


\begin{definition}\label{def11} Let $X$ be a projective manifold of dimension $n.$ 
\begin{enumerate}
\item The $i$-th Stiefel-Whitney class $w_i(X)$ of $X$ is defined by
$$w_i(X) := c_i(X)\,\, {\rm mod}\,\, 2 \in H^{2i}(X, \Z/2)  = H^{2i}(X, \Z) \otimes_{\Z} \Z/2\,\, .$$ 
\item For a partition $ \lambda = (\lambda_i)_{i=1}^{k}$ of $n,$ we define the Stiefel-Whitney number $w_{\lambda}(X)$ with respect to $\lambda$ by  
$$w_{\lambda} := \prod_{i=1}^{k} w_{\lambda_i}(X) \in \Z/2\,\, .$$ 
\item The  $i$-th rational Pontrjagin class $p_i(X)$ of $X$ is defined by 
$$p_i(X) := c_{2i}(T_X \oplus \overline{T_X}) = (c_*(T_X)c_*(T_X^*))_{2i} \in H^{4i}(X, \Q) \,\, .$$ 
\item Assume that $n$ is divisible by $2$ in $\Z.$ For a partition $\tau = (\tau_i)_{i=1}^{k}$ of $n/2,$ we define the Pontrjagin number  $p_{\tau}(X)$ with respect to $\tau$ by  
$$p_{\tau}(X) := \prod_{i=1}^{k} p_{\tau_i}(X) \in \Z\,\, .$$ 
The Pontrjagin numbers are actually not only rational numbers but also integers (by definition). We do not define the Pontrjagin number when $n$ is odd. 
\end{enumerate} 
\end{definition} 
According to the literature, $w_i(X)$ (resp $p_i(X)$) is written as $w_{2i}(X)$ (resp. $p_{4i}(X)$ sometimes with different sign convention). 
  
Let $X$ be a projective manifold and we set $c_i := c_i(X)$ and $p_i := p_i(X).$  Then, by definition,
$$ p_1 = -c_1^2 + 2c_2;$$
$$ p_2 = c_2^2 - 2c_1c_3 + 2c_4; $$
$$ p_3 = -c_3^2 - 2 c_1c_5 + 2c_2c_4 + 2c_6$$
for $i =1,$ $2,$ $3.$

The following fundamental result is well-known. It is proved (in a more general setting) by Wu (see eg. \cite[Page 131]{MS74}) and Novikov \cite{No65} respectively: 
\begin{theorem}\label{thm11} The Stiefel-Whitney classes and the rational Pontrjagin classes are topological invariants. More precisely, letting $X$ and $Y$ be projective manifolds and $\varphi : X \to Y$ an o-homeomorphism, we have: 
\begin{enumerate}
\item $\varphi^*w_i(Y) = w_i(X)$ in $H^{2i}(X, \Z/2)$ for each integer 
 $i \ge 1.$ 
\item $\varphi^*p_i(Y) = p_i(X)$ in $H^{ 4i}(X, \Q)$ for each integer  $i \ge 1.$ 
\end{enumerate} 
\end{theorem}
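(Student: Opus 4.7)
My plan is to treat (1) and (2) separately, since they rest on entirely different mechanisms.

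For (1), I would invoke Wu's formula. First, introduce the total Wu class $v(X) = 1+v_1(X)+v_2(X)+\cdots \in H^*(X, \Z/2)$, characterized by
$$\langle v_i(X) \cup x, [X]_2\rangle = \langle \mathrm{Sq}^i(x), [X]_2\rangle \quad \text{for all } x \in H^{n-i}(X, \Z/2),$$
where $[X]_2$ is the mod-$2$ fundamental class. Steenrod squares are natural with respect to continuous maps, the cup-product pairing is a topological invariant, and every homeomorphism pulls $[Y]_2$ back to $[X]_2$ (orientation being irrelevant with $\Z/2$-coefficients), so $v(X)$ is a homeomorphism invariant and $\varphi^* v(Y) = v(X)$. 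Wu's theorem then identifies $w(X) = \mathrm{Sq}(v(X))$, from which $\varphi^* w_i(Y) = w_i(X)$ follows at once. Notice that orientation plays no role in this half.

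For (2), which is Novikov's theorem, the strategy is rooted in Hirzebruch's signature theorem. I would first note that the signature $\sigma(M)$ of a closed oriented $4k$-manifold is defined from the intersection form on $H^{2k}(M, \R)$ and so is an invariant of oriented topological manifolds, while Hirzebruch gives $\sigma(M) = \langle L_k(p_1(M),\ldots,p_k(M)), [M]\rangle$; hence the top-degree component of the Hirzebruch $L$-class is manifestly topological. Novikov's device for accessing the non-top-degree components is to probe them through signatures of submanifolds: for a rational class $\alpha \in H^{n-4k}(X, \Q)$ one constructs, via a suitable map to a torus built from $\alpha$ together with a transversality argument, a smooth submanifold $Z_\alpha \subset X$ which is Poincar\'e-dual (up to a rational factor) to $\alpha$ and which has rationally trivial normal bundle. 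Multiplicativity of the $L$-class then relates $\sigma(Z_\alpha)$ to $\langle L_k(p_*(X))\cup\alpha, [X]\rangle$, and as $\alpha$ ranges over a basis of $H^*(X, \Q)$ Poincar\'e duality recovers the full rational $L$-class of $X$. Since $L$ is invertible as a formal series in the $p_i$, each $p_i(X) \in H^{4i}(X, \Q)$ emerges as a topological invariant, and applying the construction on both sides of an orientation-preserving homeomorphism $\varphi$ yields $\varphi^* p_i(Y) = p_i(X)$.

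The genuine obstacle is Novikov's theorem itself: whereas (1) is an almost formal consequence of the definition of $v(X)$ together with Wu's identity, Novikov's argument requires substantial surgery-theoretic input---in particular Browder--Novikov transversality---to produce the requisite submanifolds from a merely continuous homeomorphism and to control their normal bundles rationally. I would not attempt to reproduce that argument here; the proper course is to cite Novikov \cite{No65} for the deep input and to record the theorem in precisely the form we need for the characteristic-class constraints used in the rest of the paper.
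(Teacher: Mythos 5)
Your proposal is correct and is essentially the paper's approach: the paper offers no argument of its own for this statement, simply attributing part (1) to Wu (via \cite{MS74}) and part (2) to Novikov \cite{No65}, exactly the two sources your sketch elaborates. Your outline of Wu's formula for (1) and of Novikov's signature-of-submanifolds argument for (2) is an accurate account of those cited proofs, and your decision to defer the surgery-theoretic core of Novikov's theorem to the reference matches what the paper does.
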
 
Let $X$ be a projective manifold and  $L$ a holomorphic line bundle on $X.$ We call $L$  {\it $2$-divisible} if there is a holomorphic line bundle $M$ on $X$ such that $L = M^{\otimes 2}$ in the Picard group ${\rm Pic}\, (X).$ Similarly we call an element $x \in H^2(X, \Z)$ {\it $2$-divisible} if there is $y \in H^2(X, \Z)$ such that $x = 2y$ in $H^2(X, \Z).$ 

The following well-known corollary will be used frequently in this note:
\begin{corollary}\label{cor11} Let $X_i$ ($i =0,$ $1$) be simply connected projective manifolds and $\varphi : X_0 \to X_1$ an o-homeomorphism. Then $K_{X_0}$ is  $2$-divisible in ${\rm Pic}\, (X_0)$ if and only if $K_{X_1}$ is  $2$-divisible in ${\rm Pic}\, (X_1).$ 
\end{corollary}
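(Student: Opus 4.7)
The plan is to reduce the statement to the topological invariance of $w_2(X)$ recorded in Theorem \ref{thm11}, via the exponential exact sequence and the Bockstein sequence associated with $0 \to \Z \xrightarrow{\times 2} \Z \to \Z/2 \to 0$.

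First I would translate $2$-divisibility in $\mathrm{Pic}$ into $2$-divisibility in $H^2(\cdot,\Z)$. Since each $X_i$ is simply connected, Hodge theory gives $H^1(X_i,\sO_{X_i})=0$, so the exponential sequence identifies $\mathrm{Pic}(X_i)$ with $\mathrm{NS}(X_i) = \ker\bigl(H^2(X_i,\Z)\to H^2(X_i,\sO_{X_i})\bigr)$ via $c_1$. One direction is immediate: if $K_{X_i}=2M$ in $\mathrm{Pic}(X_i)$, then $c_1(K_{X_i})=2\,c_1(M)$ in $H^2(X_i,\Z)$. Conversely, if $c_1(K_{X_i})=2y$ for some $y\in H^2(X_i,\Z)$, then the image of $y$ in $H^2(X_i,\sO_{X_i})$ is killed by multiplication by $2$; but this group is a $\C$-vector space, hence torsion-free, so $y$ already lies in $\mathrm{NS}(X_i)=\mathrm{Pic}(X_i)$ and provides a square root of $K_{X_i}$. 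Thus, $K_{X_i}$ is $2$-divisible in $\mathrm{Pic}(X_i)$ if and only if $c_1(X_i)=-c_1(K_{X_i})$ is $2$-divisible in $H^2(X_i,\Z)$.

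Next I would use the Bockstein sequence
\[
H^2(X_i,\Z)\xrightarrow{\times 2} H^2(X_i,\Z)\longrightarrow H^2(X_i,\Z/2),
\]
whose exactness shows that a class in $H^2(X_i,\Z)$ is $2$-divisible if and only if its mod-$2$ reduction vanishes. Applied to $c_1(X_i)$, this means $K_{X_i}$ is $2$-divisible in $\mathrm{Pic}(X_i)$ if and only if $w_2(X_i)=c_1(X_i)\bmod 2=0$ in $H^2(X_i,\Z/2)$.

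Finally, since $\varphi$ is a homeomorphism, $\varphi^*\colon H^2(X_1,\Z/2)\to H^2(X_0,\Z/2)$ is an isomorphism; by Theorem \ref{thm11} it sends $w_2(X_1)$ to $w_2(X_0)$. Hence $w_2(X_0)=0$ if and only if $w_2(X_1)=0$, and combining with the previous step yields the corollary. The only mildly delicate point is the upgrade from $2$-divisibility in $H^2(X,\Z)$ to $2$-divisibility in $\mathrm{Pic}(X)$; this depends essentially on simple connectivity (to get $\mathrm{Pic}=\mathrm{NS}$) and the torsion-freeness of $H^2(X,\sO_X)$.
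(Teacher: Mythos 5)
Your proof is correct and follows essentially the same route as the paper's: both reduce $2$-divisibility of $K_{X_i}$ in ${\rm Pic}(X_i)$ to $2$-divisibility of $c_1(K_{X_i})$ in $H^2(X_i,\Z)$ using simple connectivity (the paper cites injectivity of $c_1:{\rm Pic}\to H^2(\cdot,\Z)$ with torsion-free cokernel where you spell out the exponential-sequence argument), and then invoke Theorem \ref{thm11} to transport the vanishing of the mod-$2$ reduction of $c_1$ across $\varphi$. The only caveat is notational: with the paper's indexing convention (Definition \ref{def11}) the class $c_1(X)\bmod 2$ is $w_1(X)$ rather than $w_2(X)$, but this does not affect the argument.
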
 
\begin{proof} The $\Z$-modules $H^2(X_i, \Z)$ ($i=0,$ $1$) are torsion free as $X_i$ are simply connected. Let $\varphi : X_0 \to X_1$ be an o-homeomorphism. Then, by Theorem \ref{thm11}, we have in $H^2(X_0, \Z/2)$:
$$c_1(K_{X_0}) = -c_1(X_0) = -\varphi^{*}c_1(X_1) = 
\varphi^*c_1(K_{X_1})\,\, .$$ 
Since  $\varphi^* : H^2(X_1, \Z/2) \to H^2(X_0, \Z/2)$ is an isomorphism as $\Z/2$-vector spaces, it follows that $c_1(K_{X_0})$ is $2$-divisible in $H^2(X_0, \Z)$ if and only if $c_1(K_{X_1})$ is $2$-divisible in $H^2(X_1, \Z).$ By the assumption of the simply connectedness and the projectivity of $X_i,$ the cycle map 
$$c_1 : {\rm Pic}\, (X_i) \to H^2(X_i, \Z)$$ 
is injective and the cokernel $H^2(X_1, \Z)/{\rm Im}\, c_1$ is torsion free for each $i = 0,$ $1.$ As $K_{X_i} \in {\rm Pic}\, (X_i),$ it follows that $K_{X_i}$ is  $2$-divisible in ${\rm Pic}\, (X_i)$ if and only if $c_1(K_{X_i})$ is  $2$-divisible in $H^{2}(X_i, \Z).$ This implies the result. 
\end{proof}

The next theorem is a special case of the seminal result of Freedman \cite{Fr82} (see also \cite[Page 376]{BHPV04}) in dimension $2$:  
\begin{theorem}\label{thm12} Let $X_i$ ($i =0,$ $1$) be  simply connected projective surfaces. Then $X_i$ ($i=0,$ $1$) are o-homeomorphic if and only if the lattices  $ H^2(X_i, \Z) $ are isomorphic.
\end{theorem}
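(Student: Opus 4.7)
The plan is to reduce the statement to Freedman's classification theorem for simply connected closed topological $4$-manifolds.

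First I would dispose of the forward implication, which is essentially formal. Any o-homeomorphism $\varphi : X_0 \to X_1$ induces an isomorphism $\varphi^{*} : H^2(X_1, \Z) \to H^2(X_0, \Z)$ compatible with the cup product; since $\varphi$ preserves orientation, the induced map on top homology sends fundamental class to fundamental class. Evaluating cup products on the fundamental class then shows $\varphi^{*}$ is an isometry between the intersection lattices $H^2(X_1, \Z)$ and $H^2(X_0, \Z)$.

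For the converse, I would appeal to Freedman's theorem in its oriented form: two simply connected closed oriented topological $4$-manifolds are o-homeomorphic if and only if their intersection forms on $H^2$ are isomorphic as symmetric unimodular lattices and their Kirby--Siebenmann invariants coincide. Since $X_0$ and $X_1$ carry smooth structures (they are projective), the Kirby--Siebenmann invariant of each one vanishes automatically. Consequently any lattice isomorphism $H^2(X_0, \Z) \cong H^2(X_1, \Z)$ lifts to an o-homeomorphism $X_0 \to X_1$.

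The only point that requires care, and hence the main step where one must quote Freedman precisely, is the orientation: the intersection form depends on the choice of orientation, so one has to ensure that the homeomorphism produced by Freedman's classification is orientation-preserving rather than merely a homeomorphism. This is already built into the oriented version of the theorem, so once one cites it correctly there is no further genuine obstacle.
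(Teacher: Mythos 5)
Your proposal is correct and follows essentially the same route as the paper, which gives no argument of its own but simply quotes Freedman's classification (\cite{Fr82}, see also \cite[Page 376]{BHPV04}). Your explicit treatment of the Kirby--Siebenmann invariant (vanishing because projective surfaces are smooth) and of the orientation issue supplies precisely the details the paper leaves implicit in that citation.
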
 
\begin{remark}\label{rem14} Let $S$ be a simply connected smooth projective surface. The lattice $H^2(S, \Z)$ is unimodular of signature 
$$\tau(S) := \frac{c_1(S)^2 -  2c_2(S)}{3}\,\, .$$
Moreover, the lattice $H^2(S, \Z)$ is even if and only if $K_S$ is $2$-divisible in $H^2(S, \Z).$ We also note that the isomorphism classes of the lattices $H^2(S, \Z)$ for  simply connected smooth projective 
surfaces $S$ are uniquely determined by the rank as $\Z$-modules, the signature and the parity (even or odd). See eg. \cite[Chapters I, IX]{BHPV04} for more details. 
\end{remark}

We close this section with the following useful propositions.

\begin{proposition} \label{campana} Let $X$ and $Y$ be o-homeomorphic compact complex manifolds of dimension $n.$ Assume that $w_2(X) = 0 $ and therefore  $w_2(Y) = 0, $ too.
Write $K_X = 2L_X $ and $K_Y = 2L_Y. $ Then the following assertions hold.
\begin{enumerate} 
\item $\chi(L_X) = \chi(L_Y);$ 
\item if $n$ is odd or  if  $X$ is Fano, then $\chi(L_X) = \chi(L_Y ) = 0. $
\end{enumerate} 
\end{proposition}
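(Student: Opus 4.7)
The plan is to apply Hirzebruch-Riemann-Roch in a form tailored to the hypothesis $K_X = 2L_X$. The key classical identity is the formal factorization
\[
\mathrm{td}(T_X) \;=\; e^{c_1(X)/2}\cdot \hat{A}(T_X)
\]
in rational cohomology, which is the very reason one introduces the $\hat{A}$-polynomial. Since $K_X = 2L_X$ forces $c_1(L_X) = -c_1(X)/2$, Hirzebruch-Riemann-Roch gives
\[
\chi(L_X) \;=\; \int_X \mathrm{ch}(L_X)\,\mathrm{td}(T_X) \;=\; \int_X e^{-c_1(X)/2}\,e^{c_1(X)/2}\,\hat{A}(T_X) \;=\; \int_X \hat{A}(T_X),
\]
so $\chi(L_X)$ equals the $\hat{A}$-genus of $X$; analogously $\chi(L_Y) = \hat{A}(Y)$.

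For (1), the $\hat{A}$-genus is a universal rational polynomial in the Pontrjagin classes $p_i(X)$. By Novikov's theorem (Theorem \ref{thm11}(2)) these are o-homeomorphism invariants, hence $\chi(L_X) = \hat{A}(X) = \hat{A}(Y) = \chi(L_Y)$.

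For (2), if $n$ is odd, the cleanest argument is direct Serre duality: since $K_X - L_X = L_X$ we have $h^i(X, L_X) = h^{n-i}(X, K_X - L_X) = h^{n-i}(X, L_X)$, so $\chi(L_X) = (-1)^n \chi(L_X)$ and the odd parity forces $\chi(L_X) = 0$. (Equivalently, every component of $\hat{A}$ lies in cohomological degree divisible by $4$, hence cannot pair nontrivially against the fundamental class in real dimension $2n \equiv 2 \pmod 4$.) If $X$ is Fano, then $-K_X = -2L_X$ is ample and therefore $-L_X$ itself is ample. Kodaira vanishing, applied to $L_X = K_X \otimes (-L_X)$, yields $H^i(X, L_X) = 0$ for every $i \geq 1$, while $L_X$ has no global sections since $-L_X$ is ample and $X$ is positive-dimensional. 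Thus $\chi(L_X) = 0$, and by (1) also $\chi(L_Y) = 0$.

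The only non-routine step is recognizing that the half-canonical twist converts the Todd class into the $\hat{A}$-class; once that identity is in place, (1) is immediate from Novikov's theorem and (2) collapses to standard Serre duality and Kodaira vanishing arguments.
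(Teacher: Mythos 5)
Your proposal is correct and takes essentially the same route as the paper: part (1) is precisely the Hirzebruch--Kodaira computation the authors cite (\cite{HK57}, \cite{CP94}), namely that the half-canonical twist converts the Todd class into the $\hat{A}$-class, a universal polynomial in the Pontrjagin classes and hence an o-homeomorphism invariant by Novikov's theorem; part (2) is the same Serre duality argument for odd $n$ and the same Kodaira vanishing plus Serre duality (equivalently, ampleness of $-L_X$) argument in the Fano case.
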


\begin{proof} Assertion (1) follows from Hirzebruch-Riemann-Roch, see \cite[Page 204]{HK57}, or \cite[2.4]{CP94}. If $n $ is odd, (2)  follows from Serre duality. If $X$ is Fano, apply Kodaira vanishing and Serre duality: 
$$ \chi(X,L_X) = (-1)^n h^n(X,L) = (-1)^n h^0(X,L_X) = 0\,\, .$$

\end{proof} 

\begin{proposition} \label{comparechern}  Let $X$ and $Y$ be projective manifolds and $\varphi: Y \to X$ be an o-homeo\-morphim. Assume that $b_2(X) = 1$ and - for simplicity -  that $H^2(X,\mathbb Z)$
has no torsion. 
Let $L_X$ be the ample generator of ${\rm Pic}(X)$ and  $L_Y$ be the ample generator of ${\rm Pic}(Y).$ Then there is an integer $s$ such that
 $$ c_1(Y) = \varphi^*(c_1(X) + 2s c_1(L_X))\,\, .$$
 Furthermore, for all integers $m,$ 
 $$\chi(Y,mL_Y) = \chi(X,(m+s)L_X)\,\, . $$
 \end{proposition}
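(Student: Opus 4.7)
The plan is to combine Wu's topological invariance of the first Stiefel-Whitney class with Hirzebruch-Riemann-Roch, the latter expressed through the multiplicative splitting $\mathrm{td}(T_Y)=e^{c_1(Y)/2}\hat{A}(T_Y)$ of the Todd class; since the $\hat{A}$-genus is a polynomial in the rational Pontrjagin classes, Novikov's theorem (Theorem \ref{thm11}(2)) makes it a topological invariant.

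First I identify $H^2(Y,\Z)$ explicitly. As $\varphi^*\colon H^2(X,\Z)\to H^2(Y,\Z)$ is an isomorphism, $H^2(Y,\Z)$ is free of rank one and $b_2(Y)=1$. Since $Y$ is projective and $b_2(Y)=1$, the Hodge decomposition forces $h^{2,0}(Y)=0$, so by the Lefschetz $(1,1)$-theorem $\mathrm{NS}(Y)=H^2(Y,\Z)$. Thus $c_1(L_Y)$ and $\varphi^*(c_1(L_X))$ are both generators of $H^2(Y,\Z)\cong\Z$, whence $c_1(L_Y)=\pm\varphi^*(c_1(L_X))$; for the rest of the argument I normalise the sign to $+$.

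For (1), Wu's theorem (Theorem \ref{thm11}(1)) gives $c_1(Y)\equiv\varphi^*(c_1(X))\pmod 2$ in $H^2(Y,\Z)$, so the difference is $2s\,\varphi^*(c_1(L_X))$ for some integer $s$, yielding the desired formula. For (2), combining Novikov's theorem with the $\hat{A}$-splitting gives $\varphi^*\hat{A}(T_X)=\hat{A}(T_Y)$, and Hirzebruch-Riemann-Roch reads
$$\chi(Y,mL_Y)=\int_Y e^{mc_1(L_Y)+c_1(Y)/2}\,\hat{A}(T_Y).$$
Using the normalisation together with (1), the exponent becomes $\varphi^*\bigl((m+s)c_1(L_X)+c_1(X)/2\bigr)$, so that pulling the whole integrand back through $\varphi^*$, which preserves fundamental classes, produces $\chi(Y,mL_Y)=\chi(X,(m+s)L_X)$, as required.

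The main obstacle is the sign ambiguity $c_1(L_Y)=\pm\varphi^*(c_1(L_X))$: in odd dimension the sign is pinned down by positivity of the top self-intersection numbers $L_X^n$ and $L_Y^n$, but in even dimension both signs yield the same top intersection number. Justifying the choice $c_1(L_Y)=\varphi^*(c_1(L_X))$ in even dimensions requires an additional input (for instance a Serre-duality reformulation of the opposite-sign case, or a dimension-specific check in the applications of interest); everything else is a routine combination of the topological invariance of characteristic classes with the Todd/$\hat{A}$ calculus.
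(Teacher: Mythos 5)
Your argument is exactly the one the paper intends: its ``proof'' is only a pointer to \cite[Page 204]{HK57} and \cite[2.4]{CP94}, and the content there is precisely what you wrote --- Wu's theorem gives $c_1(Y)\equiv\varphi^*c_1(X)\pmod 2$, hence the $2s\,\varphi^*(c_1(L_X))$ correction, and Novikov invariance of the rational Pontrjagin classes applied to the splitting $\mathrm{td}(T)=e^{c_1/2}\hat{A}(T)$ turns Hirzebruch--Riemann--Roch into the stated equality of Euler characteristics. The sign ambiguity $c_1(L_Y)=\pm\varphi^*(c_1(L_X))$ that you flag is genuine and is not resolved inside the proposition: with the opposite sign the same computation only yields $\chi(Y,mL_Y)=\chi(X,(s-m)L_X)$, which is not of the stated form in general, so the statement tacitly uses the $+$ normalisation. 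The paper confronts this only later, in the proof of Proposition \ref{propindex} (``A priori, $\varphi^*(L_X)=\pm L_Y$''), where positivity of $K_Y$ pins the sign down; so your caveat is not a defect of your proof but an accurate reading of a point the paper defers to its applications.
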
 
 
 \begin{proof} We again refer to \cite[Page 204]{HK57}  and \cite[2.4]{CP94}.
 \end{proof}

\section{Fano Manifolds: Proof  of Theorem \ref{cor:thm1}}\label{s2}

Recall that a  Fano $n$-fold  is said to be  {\it of Fano index} $r = r_X \in \Z_{+}$ if there is a primitive (necessarily) ample class $H \in {\rm Pic}\, (X)$ such that $-K_X = rH$ in ${\rm Pic}\, X.$ 
Then, by \cite{KO73} (see also  \cite[Theorem 1.11, Page 245]{Kol96}), $$1 \le r \le n+1$$ 
and $X = \PP^n$  if $r = n+1$ whereas $X = Q_n \subset \PP^{n+1},$ a smooth quadratic hypersurface if $r=n.$ If $r = n-1,$ then $X$ is said to be a del Pezzo manifold; these varieties
being classified; see \cite{Fj90}. 

Theorem \ref{cor:thm1} will be a consequence of the following  more general

\begin{theorem}\label{thm1} 

Let $X$ be a Fano manifold of dimension $n \geq 3.$ Let $Y$ be a compact K\"ahler manifold o-homeomorphic to $X.$ Suppose that $c_1(Y) = 0$ in  $H^2(Y, \R).$ Then  $Y$ has a splitting  $$Y \simeq Y_1 \times Y_2\,\, ,$$ such that the following holds.
\begin{enumerate}
\item $Y_j$  are simply connected K\"ahler manifolds with trivial canonical bundles, $Y_2$ possibly $0$-dimensional;
\item $Y_1$ is a Calabi-Yau manifold of odd dimension $n_1 \geq 3.$ 
\item If $\dim Y_2 > 0, $ then $n \geq 5.$ 
\end{enumerate} 


\end{theorem}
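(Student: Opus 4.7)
The plan is to combine the Beauville--Bogomolov decomposition of $Y$ with Proposition \ref{campana} to force the existence of an odd-dimensional Calabi--Yau factor.

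First I would set up the decomposition of $Y$. Since $X$ is Fano, it is simply connected (a classical consequence of rational connectedness), so the o-homeomorphism implies that $Y$ is simply connected as well. Because $H^2(Y,\Z)$ is then torsion-free and $c_1(Y) = 0$ in $H^2(Y,\R)$, the class $c_1(K_Y)$ vanishes in $H^2(Y,\Z)$; moreover, for simply connected compact K\"ahler $Y$ one has $H^{0,1}(Y) = 0$, so $\mathrm{Pic}^0(Y) = 0$ and $\mathrm{Pic}(Y) \hookrightarrow H^2(Y,\Z)$. Consequently $K_Y = \mathcal{O}_Y$. The Beauville--Bogomolov decomposition then furnishes a splitting
\[
Y \;\simeq\; \prod_{i} V_i \times \prod_{j} H_j,
\]
with each $V_i$ a simply connected Calabi--Yau (allowing K3 surfaces in dimension $2$) and each $H_j$ an irreducible hyperk\"ahler manifold of dimension $\ge 4$; no torus factor appears since $Y$ is simply connected.

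Next I would extract the crucial vanishing $\chi(\mathcal{O}_Y) = 0$. From $c_1(Y) = 0$ one has $w_2(Y) = 0$, and Theorem \ref{thm11} gives $w_2(X) = \varphi^{*}w_2(Y) = 0$. Because $X$ is simply connected projective with $w_2(X) = 0$, the argument of Corollary \ref{cor11} shows that $K_X$ is $2$-divisible in $\mathrm{Pic}(X)$; write $K_X = 2 L_X$ and (trivially) $K_Y = 2 L_Y$ with $L_Y = \mathcal{O}_Y$. Proposition \ref{campana}(2), applicable because $X$ is Fano, then yields
\[
\chi(\mathcal{O}_Y) \;=\; \chi(Y,L_Y) \;=\; \chi(X,L_X) \;=\; 0.
\]

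Finally I would read off the conclusion from the decomposition. By K\"unneth,
\[
\chi(\mathcal{O}_Y) \;=\; \prod_i \chi(\mathcal{O}_{V_i}) \cdot \prod_j \chi(\mathcal{O}_{H_j}),
\]
a Calabi--Yau $m$-fold contributes $1+(-1)^m$, and an irreducible hyperk\"ahler $2k$-fold contributes $k+1 \ge 2$. Hence some factor must be a Calabi--Yau of odd dimension, necessarily $\ge 3$. Choosing $Y_1$ to be such a factor and letting $Y_2$ be the product of the remaining factors gives (1) and (2); for (3), every non-trivial factor in the Beauville--Bogomolov decomposition of a simply connected compact K\"ahler manifold has dimension at least $2$, so $\dim Y_2 > 0$ forces $\dim Y_2 \ge 2$ and therefore $n = \dim Y_1 + \dim Y_2 \ge 5$. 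The key step is the vanishing $\chi(\mathcal{O}_Y) = 0$, which is precisely the content of Proposition \ref{campana} here and ultimately rests on transporting Kodaira vanishing across the o-homeomorphism via the invariance of $w_2$ and the rational Pontryagin classes; the remainder of the argument is a routine Hodge-number computation on the Beauville--Bogomolov factors.
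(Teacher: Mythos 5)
Your proposal is correct and follows essentially the same route as the paper: transport $w_2$ across the o-homeomorphism to get $K_X = 2L_X$, use Proposition \ref{campana} with the Fano hypothesis to obtain $\chi(\mathcal{O}_Y) = \chi(X, L_X) = 0$, and then use multiplicativity of $\chi(\mathcal{O})$ over the Beauville--Bogomolov factors to force an odd-dimensional Calabi--Yau factor. Your extra care in deducing $K_Y = \mathcal{O}_Y$ from $c_1(Y)=0$ via simple connectedness is a welcome refinement of a step the paper passes over quickly, but it is not a different argument.
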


\begin{proof} First notice that $X$ being simply connected, so is $Y.$ 
Hence $K_Y$ is trivial and $w_2(Y) = 0.$ Thus $w_2(X) = 0$ and $K_X $ is divisible by $2$ in ${\rm Pic}(X)$ by Corollary \ref{cor11}. 
Write $K_X = 2L_X$ in ${\rm Pic}(X).$
From Proposition \ref{campana}, we defer
$$ 0 = \chi(X,L_X) = \chi(Y,\sO_Y)\,\, .$$ 
By the Beauville-Bogomolov decomposition theorem we may write
$$ Y \simeq \prod Z_j \times \prod S_k\,\, ,$$
where the $Z_j $ are Calabi-Yau and the $S_k$ are hyperk\"ahler manifolds. Since $\chi(Y,\sO_Y)$ is multiplicative and since 
$ \chi(Z,\sO_Z) > 0$ for all hyperk\"ahler manifolds $Z$ and all even-dimensional Calabi-Yau manifolds $Z,$  there must be at least one Calabi-Yau factor $Y_1$ of odd dimension. 
The rest of the product will be $Y_2.$  
Since all  positive dimensional factors of $Y_2$ must have dimension at least $2,$ the remaining statements are clear. 
\end{proof}

\vskip .2cm \noindent 
{\it Proof of Theorem \ref{cor:thm1}.}
 The case $n = 4$ is immediate from Theorem \ref{thm1}(1) and (2). Assume therefore that $n \geq 6.$
By Theorem \ref{thm1} (2), the Beauville-Bogomolov decomposition of $Y$ has at least two positive dimensional factors. Then $b_2(X) = b_2(Y) \ge 2,$ 
a contradiction to our assumption $b_2(X) =1.$ 

\section{Fano Manifolds: Proof of Theorem \ref{xxx}}

To prove Theorem \ref{xxx}, we consider o-homeomorphic Fano $n$-folds $X$ and $Y$ with $b_2 = 1$ via an o-homeomorphism 
$$ \varphi: Y \to X$$ and Fano indices $r_X $ and $r_Y.$  Let $L_X $ and $L_Y$ be the ample generators of
the Picard groups so that $-K_X = r_X L_X$ and $-K_Y = r_Y L_Y.$ Of course, it suffices to show $r_X = r_Y,$ since then
$$ c_1(Y)^n = r_Y^n c_1(L_Y)^n = r_X^n c_1(L_X)^n = c_1(X)^n\,\, .$$
To prove equality of the indices, we argue by contradiction and write, using Proposition \ref{comparechern}
$$ c_1(Y) = \varphi^*(c_1(X) + 2sL_X)\,\, , $$
i.e., $r_Y = r_X + 2s.$ We may assume $s < 0.$ 
By Proposition \ref{comparechern}, we also have
$$ \chi(Y,mL_Y) = \chi(X,(m+s)L_X)$$
for all integers $m.$ In particular, setting $m = 0,$ we obtain by Kodaira vanishing and Serre duality
$$1 = \chi(Y,\sO_Y)  = \chi(X,sL_X) = (-1)^n h^n(X,sL_X) = (-1)^n h^0(X,(-r_X -s)L_X)\,\, .$$
However, this is impossible, because $-r_X - s < 0$ by our assumption $s <0.$ 

If finally $X$ is a del Pezzo $n$-fold, so is $Y$ by virtue of $r_Y = r_X.$ Since furthermore $c_1(X)^n  = c_1(Y)^n,$ the del Pezzo $n$-folds $X$ and $Y$ are deformation equivalent.
This is a consequence of the explicit classification of del Pezzo $n$-folds in \cite[Theorem 8.11]{Fj90}.

\section{Cubic Fourfolds: Proof of  Theorem \ref{thm2}}\label{s3}

Let $X \subset \PP^5$ be a smooth cubic $4$-fold. We denote by $H$ (resp. $H_X$) the hyperplane class of $\PP^5$ (resp. the restriction of $H$ to $X$). 
Throughout this section, we denote by $Y$ a smooth projective $4$-fold with an o-homeomorphism $\psi : Y \to X$ and set $L := \psi^*H_X \in H^2(Y, \Z).$ 
We already saw that $c_1(Y) \ne 0 $ (Theorem \ref{cor:thm1})
and that $Y$ is deformation equivalent to $X$ if $Y$ is Fano (Theorem \ref{xxx}).  Hence we assume that $K_Y$ is ample. 
First we compute several invariants of $X.$

\begin{lemma}\label{lem31} 

\begin{enumerate}
\item $c_1(X) = 3H_X,$ $c_2(X) = 6H_X^2,$ $c_3(X) = 2H_X^3$ and 
$c_4(X) = 27.$ 
\item The Betti numbers $b_k(X)$ are as follows:

$b_0(X) = b_8(X) = 1,$ $b_1(X) = b_3(X) = b_5(X) = b_7(X) = 0,$ 

$b_2(X) = b_6(X) = 1$ and $b_4(X) = 23.$ 

Moreover, ${\rm Pic}\, (X) \simeq H^2(X, \Z) = \Z H_X.$ 
\item The Hodge numbers $h^{p, q}(X)$ of $X$ are as follows:

$h^{0,0}(X) = h^{4,4}(X) = 1,$ $h^{p, q}(X) = 0$ for $p+q = 1, 3, 5, 7,$ 

$h^{2,0}(X) = h^{0,2}(X) = 0,$ $h^{1,1}(X) = 1,$ $h^{2,4}(X) = h^{4,2}(X) = 0,$ $h^{3,3}(X) = 1,$ 

$h^{4,0}(X) = h^{0,4}(X) = 0,$ $h^{3,1}(X) = h^{1,3}(X) = 1$ and $h^{2,2}(X) = 21.$ 

\item The signature  $\tau(X)$ of $(H^4(X, \Z), (*, **)_X)$ is $23.$ 

\end{enumerate}
\end{lemma}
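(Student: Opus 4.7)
The plan is to treat each part as a routine computation built on two tools: the normal-bundle exact sequence $0 \to T_X \to T_{\PP^5}|_X \to \sO_X(3) \to 0$ and the Lefschetz hyperplane theorem. For (1), I would exploit the normal-bundle sequence to get the identity $c(T_X) = (1+H_X)^6/(1+3H_X)$ in $H^*(X,\Z)$. Expanding through degree four yields $c_1 = 3H_X$, $c_2 = 6H_X^2$, $c_3 = 2H_X^3$, and $c_4 = 9 H_X^4$ as cohomology classes; the numerical value $c_4(X) = 27$ then comes from $\int_X H_X^4 = \deg X = 3$.

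For (2), the Lefschetz hyperplane theorem identifies $H^k(X,\Z)$ with $H^k(\PP^5,\Z)$ for $k<4$, and Poincar\'e duality handles $k>4$; this also gives $\mathrm{Pic}(X) = \Z H_X$ via the exponential sequence (using $h^{0,1} = h^{0,2} = 0$). The single remaining unknown $b_4$ is pinned down by $\chi(X) = \int_X c_4(X) = 27$. For (3), the same Lefschetz theorem delivers $h^{p,q}(X) = h^{p,q}(\PP^5)$ whenever $p+q<4$, Serre duality completes $p+q>4$ and the outer diagonal entries of $p+q=4$, and adjunction gives $h^{4,0} = h^0(\sO_X(-3)) = 0$. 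The remaining two numbers $h^{3,1}$ and $h^{2,2}$ I would extract from Griffiths' Jacobian-ring description of the primitive middle cohomology of a smooth hypersurface: for $R = \C[x_0,\ldots,x_5]/(\partial F/\partial x_i)$ the Jacobian ring of the defining cubic $F$, smoothness of $X$ makes the partials a regular sequence, whence $R$ has Hilbert series $(1+t)^6$; this yields $h^{3,1}_{\mathrm{prim}} = \dim R_0 = 1$ and $h^{2,2}_{\mathrm{prim}} = \dim R_3 = 20$, and adding the hard-Lefschetz contribution of $H_X^2$ gives $h^{2,2} = 21$. As a cross-check, I would compute $\chi(\Omega^p_X)$ for $p = 0,1,2$ from (1) via Hirzebruch--Riemann--Roch and read off $h^{2,2}$ from $\chi(X) = \sum_p (-1)^p \chi(\Omega^p_X)$.

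For (4), I would apply the Hirzebruch signature formula $\tau(X) = \int_X \tfrac{1}{45}(7p_2 - p_1^2)$, feeding in $p_1 = -c_1^2 + 2c_2$ and $p_2 = c_2^2 - 2c_1 c_3 + 2c_4$ computed from the Chern data in (1); alternatively, the Hodge-theoretic identity $\tau(X) = \sum_{p+q=4}(-1)^p h^{p,q}(X)$ draws directly on item (3). The whole lemma is bookkeeping once these tools are in place and I do not anticipate any genuine obstacle; the most error-prone step is (3), where the Griffiths residue calculus requires care with the grading shifts, and the HRR cross-check is a useful safeguard.
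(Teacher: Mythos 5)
For parts (1)--(3) your plan is essentially the paper's own proof: the same normal-bundle sequence and expansion of $(1+H)^6(1+3H_X)^{-1}$ for the Chern classes (your $c_4=9H_X^4$, hence $27$, is right), Lefschetz plus Poincar\'e duality plus $c_4(X)=27$ for the Betti numbers, and the Jacobian ring for the middle Hodge numbers. The only real difference is that the paper outsources $h^{3,1}=1$, $h^{2,2}=21$ to a citation of Voisin, whereas you carry out the Griffiths residue computation yourself; your Hilbert series $(1+t)^6$ and the identifications $h^{3,1}_{\mathrm{prim}}=\dim R_0=1$, $h^{2,2}_{\mathrm{prim}}=\dim R_3=20$ are correct, so this part is fine and in fact more self-contained than the paper.

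Part (4) is where you must stop and recompute, because both of your proposed methods yield $19$, not the asserted $23$. With $p_1=3H_X^2$ and $p_2=126$ the Hirzebruch formula gives $\frac{1}{45}(7\cdot 126-27)=19$; the alternating sum of the Hodge numbers from (3) also gives $19$; and indeed $H^4(X,\Z)$ of a smooth cubic fourfold is the odd unimodular lattice of signature $(21,2)$. So your argument, carried out honestly, contradicts the statement: the value $23$ is a slip in the paper itself, whose own formula $\tau(X)=\sum_{p,q}(-1)^p h^{p,q}(X)$ evaluates to $19$ with the Hodge numbers of (3). (The slip is harmless downstream: in the proof of Lemma \ref{lem33} the paper writes $2a-2b+c+4=\tau(Y)=23$, where the spurious $+4$ exactly compensates, so $b=1$ and $c=21-2a$ still follow.) A second caveat: your identity $\tau(X)=\sum_{p+q=4}(-1)^p h^{p,q}(X)$ restricted to the middle degree is not the correct general signature formula --- the sum must run over all $p,q$; for the cubic fourfold the non-middle contributions happen to cancel, but for a K3 surface the restricted sum gives $-18$ instead of the true $-16$, so you should quote the full formula.
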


\begin{proof} Consider the standard exact sequence
$$0 \to T_X \to T_{\PP^5}|_X \to N_{X/\PP^5} = \sO_X(3H_X) \to 0\,\, .$$
This leads to the following equality of Chern classes 
\begin{equation}\label{eq31}
c_*(X) =  c_*(T_{\PP^5})|_X \cdot c_*(\sO_X(3H_X))^{-1}\,\, .
\end{equation}
Notice that $c_*(T_{\PP^5}) = (1 + H)^6$ and 
$$c_*(\sO_X(3H_X))^{-1} = (1+3H_X)^{-1} = 1 - 3H_X + (3H_X)^2 - (3H_X)^3 + (3H_X)^4\,\, .$$
Substituting these two equations into (\ref{eq31}) and comparing the terms of equal degree, we readily obtain assertion (1). 
By the Lefschetz hyperplane theorem and Poincar\'e duality, we obtain the values of $b_k(X)$ for $k \ne 4.$ Substituting these into $c_4(X) = 27 $ gives $b_4(X) = 23.$

The middle Hodge numbers $h^{p, q}(X)$ ($p+q = 4$) are computed in \cite[Page 581]{Vo86} based on the Jacobian ring, as claimed in (3). 

Let $r(X)$ be the signature of $(H^4(X, \Z), (*,**)_X).$ By e.g.  \cite[Theorem 6.33]{Vo02},
$$r(X) = \sum_{p, q \ge 0} (-1)^ph^{p, q}(X)\,\, .$$
Putting in the Hodge numbers $h^{p,q}(X),$ we arrive at  $r(X) = 23.$

This completes the proof of Lemma \ref{lem31}.   
\end{proof}

Using Hodge decomposition and Hodge symmetry we immediately deduce
\begin{lemma}\label{lem32} 

\begin{enumerate}

\item The Betti numbers $b_k(Y)$ are as follows:

$b_0(Y) = b_8(Y) = 1,$ $b_1(Y) = b_3(Y) = b_5(Y) = b_7(Y) = 0,$ 

$b_2(Y) = b_6(Y) = 1$ and $b_4(Y) = 23.$ 

Moreover, ${\rm Pic}\, (Y) \simeq H^2(Y, \Z) = \Z L.$

\item The Hodge numbers $h^{p, q}( Y)$ ($p+q \not= 4$) of $Y$ are as follows:

$h^{0,0}(Y) = h^{4,4}(Y) = 1,$ $h^{p, q}(Y) = 0$ for $p+q = 1, 3, 5, 7,$ 

$h^{2,0}(Y) = h^{0,2}(Y) = 0,$ $h^{1,1}(Y) = 1,$ $h^{2,4}(Y) = h^{4,2}(Y) = 0,$ $h^{3,3}(Y) = 1.$

\item The signature  $\tau(Y)$ of $(H^4(Y, \Z), (*, **)_Y)$ is $23.$ 

\end{enumerate}
\end{lemma}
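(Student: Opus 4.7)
The plan is to read off everything from the fact that $\psi\colon Y\to X$ is an oriented homeomorphism of simply connected $4$-folds, using Hodge theory on the K\"ahler (in fact projective) side to decompose each Betti number into Hodge numbers. Nothing here is really hard; the only ``input'' beyond topology is projectivity, which forces $h^{1,1}(Y)\ge 1$.

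First I would verify (1). Since $\psi$ is a homeomorphism, $b_k(Y)=b_k(X)$ for all $k$, giving the claimed Betti numbers by Lemma~\ref{lem31}(2). As $X$ is simply connected so is $Y$, hence $H^2(Y,\Z)$ is torsion free; combined with $b_2(Y)=1$ this yields $H^2(Y,\Z)\simeq\Z$, and clearly $L=\psi^{*}H_X$ generates it because $\psi^{*}$ is an isomorphism sending a generator to a generator. The identification ${\rm Pic}(Y)\simeq H^2(Y,\Z)$ follows from the exponential sequence once one knows $h^{0,1}(Y)=h^{0,2}(Y)=0$, which is part of (2) below; so I would prove (2) first and then return to this point.

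For (2), I would exploit the Hodge decomposition $b_{p+q}(Y)=\sum h^{p,q}(Y)$ together with Hodge symmetry $h^{p,q}=h^{q,p}$ and Serre duality $h^{p,q}=h^{n-p,n-q}$. The vanishings $b_1=b_3=b_5=b_7=0$ immediately kill all $h^{p,q}(Y)$ with $p+q$ odd. From $b_2(Y)=1=2h^{2,0}(Y)+h^{1,1}(Y)$ and the fact that $Y$ is projective (so it carries an ample class, whence $h^{1,1}(Y)\ge 1$), one forces $h^{2,0}(Y)=h^{0,2}(Y)=0$ and $h^{1,1}(Y)=1$. Serre duality then gives $h^{3,4}(Y)=h^{4,3}(Y)=0$ and the symmetric statement for $b_6$; indeed $b_6(Y)=1=2h^{4,2}(Y)+h^{3,3}(Y)$ together with $h^{3,3}(Y)=h^{1,1}(Y)=1$ (by Serre duality) yields $h^{4,2}(Y)=h^{2,4}(Y)=0$. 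The remaining Hodge numbers in the list are trivial ($h^{0,0}=h^{4,4}=1$). With $h^{0,1}(Y)=h^{0,2}(Y)=0$ established, the exponential sequence
\[
H^1(Y,\sO_Y)\to{\rm Pic}(Y)\to H^2(Y,\Z)\to H^2(Y,\sO_Y)
\]
collapses to ${\rm Pic}(Y)\simeq H^2(Y,\Z)=\Z L$, finishing (1).

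For (3), the signature $\tau(Y)$ of the symmetric bilinear form $(*,**)_Y$ on $H^4(Y,\Z)$ is a topological invariant of the oriented manifold $Y$ (it depends only on cup product and the orientation class), so $\tau(Y)=\tau(X)=23$ by Lemma~\ref{lem31}(4). Alternatively one can read it off from the Hirzebruch signature formula via the Pontrjagin classes, which are o-homeomorphism invariants by Theorem~\ref{thm11}(2), but the direct topological argument is shorter. The only mild obstacle in the whole lemma is the one place where projectivity is needed (to guarantee $h^{1,1}(Y)\ge 1$ and thereby close off the middle Betti-number equations), so I would be careful to invoke it explicitly there.
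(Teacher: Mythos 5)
Your proposal is correct and follows essentially the same route as the paper, which simply states that the lemma follows ``immediately'' from Hodge decomposition and Hodge symmetry applied to the topologically invariant Betti numbers and signature. (One small simplification: you do not actually need projectivity to get $h^{2,0}(Y)=0$, since $2h^{2,0}(Y)+h^{1,1}(Y)=1$ already forces $h^{2,0}(Y)=0$ by parity.)
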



We introduce the following shorthands
$$h^{4, 0}(Y) = a\,\, ,\,\, h^{3, 1}(Y) = b\,\, ,\,\, h^{2,2}(Y) = c\,\, ,\,\, K_Y = rL \in {\rm Pic}\, (Y)\,\, .$$
Via the isomorphisms $c_1 : {\rm Pic}\, (Y) \simeq H^2(Y, \Z)$ and $c_1 : {\rm Pic}\, (X) \simeq H^2(X, \Z),$ we regard $L \in {\rm Pic}\, (Y)$ and $H_X \in {\rm Pic}\, (X)$ whenever suitable.

\begin{lemma}\label{lem33} 

\begin{enumerate}

\item $b=1$ and $c = 21 - 2a.$ In particular $0 \le a \le 10.$ 

\item $\chi(\sO_Y) = a +1.$ 

\end{enumerate}
\end{lemma}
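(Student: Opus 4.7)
The plan is to extract two independent linear relations on $(a,b,c)$ from topological invariants of $Y$---the fourth Betti number and the signature of the intersection pairing on $H^{4}(Y,\Z)$---and to read $\chi(\sO_Y)$ off directly from the Hodge numbers supplied by Lemma \ref{lem32}.

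Part $(2)$ is immediate. The Dolbeault decomposition on the K\"ahler manifold $Y$ gives
$$\chi(\sO_Y)=\sum_{q=0}^{4}(-1)^{q}h^{0,q}(Y),$$
and Lemma \ref{lem32}$(2)$ together with Hodge symmetry supplies $h^{0,1}=h^{0,2}=h^{0,3}=0$ and $h^{0,4}=a$, yielding $\chi(\sO_Y)=a+1$.

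For part $(1)$, the first linear relation comes from Lemma \ref{lem32}$(1)$: the Hodge decomposition of $H^{4}(Y,\C)$ together with $b_{4}(Y)=23$ gives $2a+2b+c=23$. The second relation uses the signature $\tau(Y)$ of $\bigl(H^{4}(Y,\Z),(*,**)_{Y}\bigr)$. Since an o-homeomorphism pulls back the cup-product pairing isometrically, $\tau(Y)=\tau(X)$; equivalently, this follows by combining Novikov's Theorem \ref{thm11}$(2)$ with the Hirzebruch signature theorem applied to the Pontryagin classes extracted from Lemma \ref{lem31}$(1)$, which gives $\tau(X)=\frac{1}{45}\bigl(7p_{2}(X)-p_{1}(X)^{2}\bigr)\cdot[X]=19$. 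On the other hand, the Hodge index theorem expresses
$$\tau(Y)=\sum_{p,q\ge 0}(-1)^{p}h^{p,q}(Y),$$
and substituting the Hodge numbers of Lemma \ref{lem32}$(2)$ for all $(p,q)$ with $p+q\neq 4$ (whose contributions pair off to zero) this collapses to $2a-2b+c$.

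Combining $2a-2b+c=19$ with $2a+2b+c=23$ yields $b=1$ and $c=21-2a$; then $c\ge 0$ together with $a\ge 0$ forces $0\le a\le 10$. The main point requiring care is correctly applying the Hodge index theorem and evaluating $\tau(X)$ via the Hirzebruch signature formula from the Chern classes of Lemma \ref{lem31}$(1)$; both are standard, and no serious obstacle arises.
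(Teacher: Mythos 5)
Your proof is correct and follows the same strategy as the paper: the two linear relations $2a+2b+c=b_4(Y)=23$ (Hodge decomposition) and $2a-2b+c=\tau(Y)$ (Hodge-theoretic signature formula), together with $\chi(\sO_Y)=\sum_q(-1)^qh^{0,q}(Y)=1+a$ for part (2). The only divergence is how the signature is evaluated: you get $\tau(X)=\frac{1}{45}\bigl(7p_2(X)-p_1(X)^2\bigr)=\frac{1}{45}(882-27)=19$, which is indeed the correct signature of $H^4$ of a cubic fourfold (type $(21,2)$), whereas the paper states $\tau(X)=23$ in Lemmas \ref{lem31}(4) and \ref{lem32}(3) and correspondingly writes its signature equation as $2a-2b+c+4=23$; these two formulations reduce to the identical relation $2a-2b+c=19$, so the conclusions $b=1$, $c=21-2a$, $0\le a\le 10$ agree.
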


\begin{proof} 
By our assumption and Hodge symmetry,  $b_4(Y) = b_4(X) $ and 
$a = h^{4, 0}(Y) = h^{0, 4}(Y)$ and 
$b = h^{3, 1}(Y) = h^{1, 3}(Y).$
Hence by Lemma  \ref{lem32} and Hodge decomposition,
\begin{equation} \label{eq32}
2a + 2b + c = b_4(Y) = 23\,\, .
\end{equation}
By $r(Y) = \sum_{p, q \ge 0} (-1)^ph^{p, q}(Y)$ (\cite{Vo02}) and Lemmas  \ref{lem32}, we obtain
\begin{equation}\label{eq33}
2a -2b + c + 4 = r(Y)  = 23\,\, . 
\end{equation}
Subtracting (\ref{eq33}) from (\ref{eq32}), gives $b=1,$ hence $c = 21 - 2a.$ 
As 
$$\chi(\sO_Y) = \sum_{q=0}^{4} (-1)^{q} h^{0, q}(Y)\,\, ,$$ 
the assertion $\chi(\sO_Y) = 1+a$ follows.
\end{proof}

Next we compute all the Chern numbers in terms of $r.$
\begin{lemma}\label{lem34} 
$$c_1(Y)^4 = 3r^4\,\, ,\,\, c_1(Y)^2c_2(Y) = \frac{3}{2}r^2(r^2+3)\,\, ,$$
$$c_2(Y)^2 = 3(\frac{r^2 + 3}{2})^2\,\, ,\,\, c_1(Y)c_3(Y) = -36 + \frac{3}{2}(\frac{r^2+3}{2})^2\,\, ,$$
and $c_4(Y) = 27.$
\end{lemma}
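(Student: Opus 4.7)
The plan is to exploit two inputs. First, intersection numbers and the topological Euler characteristic are invariant under the o-homeomorphism $\psi : Y \to X$, and $\psi^*$ is a ring homomorphism on cohomology sending $H_X$ to $L$. Second, by Theorem \ref{thm11}(2) the rational Pontrjagin classes satisfy $p_i(Y) = \psi^* p_i(X)$ in $H^{4i}(Y, \Q)$. Everything then reduces to combining these two invariances with the explicit Chern data for $X$ already collected in Lemma \ref{lem31}(1).

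First I would dispose of the two Chern numbers that come for free. Writing $c_1(Y) = -rL$, the equality $L^4 = H_X^4 = 3$ (preservation of intersection numbers, since $\psi^* H_X = L$ and $\psi$ has degree $+1$) immediately gives $c_1(Y)^4 = 3 r^4$, and $c_4(Y) = \chi_{\mathrm{top}}(Y) = \chi_{\mathrm{top}}(X) = 27$.

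Next I would use $p_1$-invariance to pin down $c_2(Y)$. A short calculation from Lemma \ref{lem31}(1) yields $p_1(X) = -c_1(X)^2 + 2 c_2(X) = 3 H_X^2$, hence $p_1(Y) = \psi^* p_1(X) = 3 L^2$ in $H^4(Y, \Q)$. Combining this with the tautology $p_1(Y) = -c_1(Y)^2 + 2 c_2(Y) = -r^2 L^2 + 2 c_2(Y)$ forces the rational identity
\[
c_2(Y) \;=\; \tfrac{r^2 + 3}{2}\, L^2 \quad \text{in } H^4(Y, \Q).
\]
Substituting this back into $c_1(Y)^2 c_2(Y)$ and $c_2(Y)^2$, and again using $L^4 = 3$, delivers the two stated formulas for these Chern numbers.

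The remaining number $c_1(Y) c_3(Y)$ is handled by $p_2$-invariance. A direct computation from Lemma \ref{lem31}(1) gives $\int_X p_2(X) = \int_X\bigl(c_2^2 - 2 c_1 c_3 + 2 c_4\bigr)(X) = 108 - 36 + 54 = 126$, and Theorem \ref{thm11}(2) then yields $\int_Y p_2(Y) = 126$ as well. Expanding $p_2(Y) = c_2(Y)^2 - 2 c_1(Y) c_3(Y) + 2 c_4(Y)$ with the values of $c_2(Y)^2$ and $c_4(Y)$ already in hand solves for $c_1(Y) c_3(Y)$ to produce the claimed expression. There is no real obstacle here: once Novikov's theorem is granted, the whole lemma is a bookkeeping exercise. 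The one subtlety worth flagging is that $c_2(Y) = \tfrac{r^2+3}{2} L^2$ is a priori only a rational identity in $H^4(Y,\Q)$, yet every one of the four intersection numbers we ultimately extract is an integer, as it must be for a Chern number.
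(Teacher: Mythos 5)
Your proposal is correct and follows essentially the same route as the paper: compute $p_1(X)=3H_X^2$ and $p_2(X)=126$ from the Chern data of the cubic, transport them via Novikov invariance to get $c_2(Y)=\tfrac{r^2+3}{2}L^2$ and the $p_2$-relation, then read off all five Chern numbers using $L^4=3$ and $c_4(Y)=c_4(X)=27$. The only difference is cosmetic (you flag the a priori rationality of the identity for $c_2(Y)$, which the paper leaves implicit); no gap.
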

\begin{proof}
As $Y$ is o-homeomorphic to $X,$ $c_4(Y) = c_4(X) = 27$ by Lemma \ref{lem31}. 
By Lemma \ref{lem31}, we have
\begin{equation}\label{eq34}
p_1(X) = 2c_2(X) - c_1(X)^2  = 3H_X^2\,\, ,
\end{equation}
\begin{equation}\label{eq35}
p_2(X) = 2c_4(X) -2c_1(X)c_3(X) + c_2(X)^2 = 126\,\, .
\end{equation}
The invariance of the Pontrjagin classes  (Theorem \ref{thm11} (2))
yields
\begin{equation}\label{eq36}
p_1(Y) = 2c_2(Y) -c_1(Y)^2 = 3L^2\,\, ,
\end{equation}
\begin{equation}\label{eq37}
p_2(Y) = 2c_4(Y) - 2c_1(Y)c_3(Y) + c_2(Y)^2 = 126\,\, .
\end{equation}
As $c_1(Y) = -K_Y = -rL,$ it follows from (\ref{eq36}) that 
\begin{equation}\label{eq38}
c_2(Y) = \frac{r^2 + 3}{2}L^2\,\, .
\end{equation}
As $c_1(Y) = -rL$ and $(L^2)^2 = L^4 = H_X^4 = 3,$ the first three equalities in Lemma \ref{lem33} follow from (\ref{eq38}). Then substituting $c_4(Y) = 27$ and $c_2(Y)^2 = 3((r^2+3)/2)^2$ into (\ref{eq37}), we obtain the value $c_1(Y)c_3(Y)$ as claimed.  
\end{proof}

Next we determine $r$ and $a$: 
\begin{lemma}\label{lem35} 
$r=  3$ and $a=0.$ In particular, $K_Y =  3L$ in ${\rm Pic}\, (Y)$ and $h^{p, q}(Y) = h^{p, q}(X)$ for all $p,$ $q.$ 
\end{lemma}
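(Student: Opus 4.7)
The plan is to determine $r$ and $a$ by combining four ingredients: the parity of $r$ from $w_2$-invariance, an expression for $\chi(\sO_Y)$ in terms of $r$ coming from Proposition~\ref{comparechern}, the bounds $0 \le a \le 10$ from Lemma~\ref{lem33}, and finally the integrality of $c_3(Y)$ as an integral cohomology class.

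First I would argue that $r$ is odd. Since $K_X = -3H_X$ with $H_X$ a primitive generator of $\mathrm{Pic}(X)$, $K_X$ is not $2$-divisible, so by Corollary~\ref{cor11} neither is $K_Y = rL$; together with the ampleness of $K_Y$ this gives $r \geq 1$ odd. Next, Proposition~\ref{comparechern} compares $c_1(Y) = -rL$ with $c_1(X) = 3H_X$, forcing the integer $s$ of that proposition to be $s = -(r+3)/2$ (integral because $r$ is odd) and yielding
$$a + 1 \;=\; \chi(\sO_Y) \;=\; \chi(X, sH_X) \;=\; \chi\bigl(X,\, \tfrac{r-3}{2}H_X\bigr),$$
the last equality by Serre duality on $X$ using $K_X = -3H_X$. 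Since $\chi(X, mH_X) = \binom{m+5}{5} - \binom{m+2}{5}$ is explicit, the constraints $0 \le a \le 10$ leave exactly two cases: $(r,a) = (3,0)$ or $(5,5)$. Indeed, $r=1$ gives $a = -1 < 0$, while $r \ge 7$ forces $a \ge 20 > 10$.

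The main obstacle is excluding $r = 5$. My approach is the integrality of $c_3(Y) \in H^6(Y, \Z)$. Since $H^2(Y, \Z) = \Z\cdot L$ and $Y$ is a smooth compact oriented $4$-fold, Poincar\'e duality gives $H^6(Y, \Z) = \Z\cdot g$ with $L \cdot g = 1$. Writing $c_3(Y) = \beta g$ with $\beta \in \Z$, one has $c_1(Y)\cdot c_3(Y) = -r\beta$, so $r$ must divide the topologically-determined integer $c_1(Y)\, c_3(Y)$. For $r = 5$, Lemma~\ref{lem34} gives $c_1(Y) c_3(Y) = -36 + \tfrac{3}{2}(14)^2 = 258$, but $5 \nmid 258$: contradiction.

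Hence $r = 3$ and $a = 0$, so $K_Y = 3L$ in $\mathrm{Pic}(Y)$. The equality of all Hodge numbers follows from $a = 0$ combined with $b = 1$ and $c = 21 - 2a = 21$ provided by Lemma~\ref{lem33}, matching those of $X$ listed in Lemma~\ref{lem31}(3). As a sanity check, one could also compute $\chi(\sO_Y)$ directly by Hirzebruch--Riemann--Roch from Lemma~\ref{lem34}; the Chern-number substitution should collapse to $\chi(\sO_Y) = (r^2-1)(r^2+7)/128$, consistent with the value $1$ at $r = 3$.
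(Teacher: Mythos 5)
Your proof is correct and follows essentially the same route as the paper: a closed formula for $\chi(\sO_Y)=a+1$ as a function of $r$, the bound $0\le a\le 10$ from Lemma \ref{lem33} to reduce to $r\in\{3,5\}$, and the divisibility of the integer $c_1(Y)c_3(Y)$ by $r$ to exclude $r=5$. The only cosmetic difference is that you derive the Euler characteristic formula via Proposition \ref{comparechern} and Serre duality on the cubic rather than by substituting the Chern numbers of Lemma \ref{lem34} into Riemann--Roch on $Y$; both yield $a+1=\tfrac{1}{128}\bigl((r^2+3)^2-16\bigr)$.
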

\begin{proof} By the Riemann-Roch formula and Lemma \ref{lem33}  (2), we have
$$a + 1 = \chi(\sO_Y) = -\frac{1}{720}(c_1^4 -4c_1^2c_2 -3c_2^2 -c_1c_3 + c_4)\,\, .$$
By Lemma \ref{lem34}, the right hand side is written in terms of $r$ as follows:
$$a+1 = -\frac{1}{720}(3r^4 - 4\cdot \frac{3(r^2+3)r^2}{2} - 9 (\frac{r^2+3}{2})^2 + 36 - \frac{3}{2}(\frac{r^2+3}{2})^2 + 27)\,\, .$$
We set $R := r^2.$ Then simplifying the right hand side, we finally obtain 
\begin{equation}\label{eq39}
a+1 = \frac{1}{128}((R+3)^2 - 16) = \frac{1}{128}((r^2+3)^2 - 16)\,\, .
\end{equation}
As $a+1$ is a positive integer, it follows from (\ref{eq39}) that $r^2 + 3$ is an even integer and satisfies $r^2 + 3 > 4.$ Thus $r$ is an odd integer such that $r \ge 3.$ 

If $r \ge 7,$ then $r^2 + 3 \ge 52$ and therefore
$$\frac{1}{128}((r^2+3)^2 - 16) \ge \frac{52^2 -{ 16}}{128} = 21 
\ge 12\,\, .$$
On the other hand, as $0 \le a \le 10$ by Lemma \ref{lem33}, we have $a+1 \le 11.$ Thus $r \le 5$ by (\ref{eq39}). As $r \ge 3$ and $|r|$ is an odd integer, it follows that $r= 3$ or $5.$ If $r = 5,$ then $c_1(X) =  5L.$ Then, by Lemma \ref{lem34}, we have
$$ 5 L.c_3(Y) = -36 + \frac{3}{2} \cdot 14^2 = -258\,\, ,$$
a contradiction, as the left hand side is an integer divisible by $5$ { in $\Z$} but the right hand side is not. Hence $r = 3.$ Then $a = 0$ by (\ref{eq39}). Now the last statement follows from Lemma \ref{lem32} and Lemma \ref{lem33}. This completes the proof.  
\end{proof}

This completes the proof of Theorem \ref{thm2}(1). 

In what follows, we show Theorem \ref{thm2} (2). We may and will assume that $\psi$ is a log o-homeomorphism. Recall that $\psi^*H_X = L$ and ${\rm Pic}\, (X) = \Z H_X$ and ${\rm Pic}\, (Y) = \Z L.$ 
As $\psi$ is a log o-homeomorphism, there is then a positive intger $d$ and a smooth $3$-fold $V \in |dH_X|$ and a smooth $3$-fold $W \subset Y$ (necessarily in $|dL|$) such that $\psi|_V : V \to W$ is an o-homeomorphism. In particular, $L$ is the {\it ample} generator of ${\rm Pic}\, (Y).$ 

\begin{lemma}\label{lem36} Assume that $K_Y$ is ample. Then:
$$c_3(V) = 3d(2 -6d + 3d^2 - d^3)\,\, ,\,\, c_3(W) = 3d(-2 -6d - 3d^2 - d^3)\,\, .$$
In particular, $c_3(W) \not= c_3(V).$ 
\end{lemma}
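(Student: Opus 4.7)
The plan is to apply adjunction twice, to the smooth divisors $V \subset X$ and $W \subset Y$, and to extract the degree-three part. This reduces both Chern numbers to pairings of the $c_i$ with powers of a polarization, all of which have already been determined in the preceding lemmas.

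For $V$, I would use the normal bundle sequence $0 \to T_V \to T_X|_V \to \sO_V(dH_X) \to 0$, which gives $c_*(T_V) = c_*(T_X)|_V \cdot (1 + dH_X|_V)^{-1}$. Expanding the degree-three component and integrating over $[V] = dH_X \cdot [X]$ yields
$$c_3(V) = d\bigl(H_X \cdot c_3(X) - dH_X^2 \cdot c_2(X) + d^2 H_X^3 \cdot c_1(X) - d^3 H_X^4\bigr).$$
Plugging in the Chern numbers from Lemma \ref{lem31}---namely $H_X \cdot c_3(X) = 6$, $H_X^2 \cdot c_2(X) = 18$, $H_X^3 \cdot c_1(X) = 9$, and $H_X^4 = 3$---recovers the claimed $c_3(V) = 3d(2 - 6d + 3d^2 - d^3)$.

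For $W$ the strategy is parallel, but first I need the relevant intersection numbers on $Y$. Since $\psi$ is an o-homeomorphism with $\psi^*H_X = L$, we have $L^4 = H_X^4 = 3$; Lemma \ref{lem35} gives $c_1(Y) = -3L$, so $L^3 \cdot c_1(Y) = -9$; equation \eqref{eq38} with $r = 3$ gives $c_2(Y) = 6L^2$, so $L^2 \cdot c_2(Y) = 18$; and Lemma \ref{lem34} combined with $c_1(Y) = -3L$ forces $L \cdot c_3(Y) = -6$. The same adjunction calculation now yields
$$c_3(W) = d\bigl(L \cdot c_3(Y) - dL^2 \cdot c_2(Y) + d^2 L^3 \cdot c_1(Y) - d^3 L^4\bigr) = 3d(-2 - 6d - 3d^2 - d^3),$$
and the difference $c_3(V) - c_3(W) = 6d(2 + 3d^2)$ is nonzero for every $d \geq 1$. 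The only subtlety worth flagging is that we never need $c_3(Y)$ as a cohomology class---only its pairing with $L$, which is pinned down by the Pontryagin invariance of Lemma \ref{lem34} once $c_1(Y) = -3L$ is known---so no control of the structure of $H^6(Y,\Q)$ is required, and everything else is a formal expansion of the adjunction formula.
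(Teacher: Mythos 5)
Your proposal is correct and follows essentially the same route as the paper: the normal bundle sequences for $V \subset X$ and $W \subset Y$, expansion of the degree-three Chern class, and substitution of the intersection numbers from Lemmas \ref{lem31}, \ref{lem34} and \ref{lem35}. The only (harmless) deviation is that the paper first pins down $c_3(Y) = -2L^3$ as a cohomology class using $H^{3,3}(Y,\Z) \simeq \Z$, whereas you correctly observe that only the pairing $L \cdot c_3(Y) = -6$, which follows directly from $c_1(Y)c_3(Y) = 18$ and $c_1(Y) = -3L$, is needed.
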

\begin{proof} We set $H_V := H|_{V} = H_X|_V$ and $L_W = L|_W.$
By the standard exact sequence 
$$0 \to T_V \to T_X|_V \to N_{V/X} = \sO_V(dH_V) \to 0$$
and by Lemma \ref{lem32}, we have 
$$c_{*}(V) = (1+3H_V + 6H_V^2 + 2H_V^3)(1 - dH_V + d^2H_V^2 - d^3H_V^3)\,\, .$$
Therefore,
$$c_3(V) = 3d(2 -6d + 3d^2 - d^3)\,\, .$$

By Lemma \ref{lem35}, $c_1(Y) = -K_Y = -3L,$ i.e., $r = -3,$ as $K_Y$ is ample and $L$ is now the ample generator of ${\rm Pic}\, (Y).$ By Lemma  \ref{lem34} and by $r=-3,$ we have that
$$c_2(Y) = \frac{3^2 +3}{2}L^2 = 6L^2\,\, .$$
As $H^{3, 3}(Y, \Z) \simeq \Z$ and 
$$c_3(Y)c_1(Y) = -36 + \frac{3}{2}(\frac{3^2 +3}{2})^2 = 18 = -2L^3.c_1(Y)$$
by $c_1(Y) = -3L$ and $L^4 = 3,$ it follows that 
$$c_3(Y) = -2L^3\,\, .$$
Thus, by the standard exact sequence 
$$0 \to T_W \to T_Y|_W \to N_{W/Y} = \sO_W(dL_W) \to 0$$
we have
$$c_{*}(W) = (1-3L_W + 6L_W^2 - 2L_W^3)(1 - dL_W + d^2L_W^2 - d^3L_W^3)
\,\, .$$
Using $L_W^3 = d(L^3, L)_Y = 3d,$ we obtain
$$c_3(W) = 3d(-2 -6d - 3d^2 - d^3)\,\, .$$
If $c_3(V) = c_3(W),$ then, as $d \not= 0,$ we have then $2 + 3d^2 = 0.$ However, this impossible, as $d$ is an integer. This complete the proof.
\end{proof}
As $W$ is o-homeomorphic to $V,$ it follows from Lemma \ref{lem36} that $Y$ can not be of general type, if $Y$ is log o-homeomorphic to $X.$ This completes the proof of Theorem \ref{thm2} (2).

\section{Del Pezzo Fourfolds of degree  five
} \label{s3a}

We begin with the following general 

\begin{proposition} \label{propindex} 
Let $X$ be a Fano manifold of dimension $n$ with $b_2(X) = 1$ and of Fano index $r.$  Let $L_X$ be the ample generator of $X.$  Assume that $b_{2k}(X) \leq  2$ for $k  > 1$ and that $b_{2k+1}(X) = 0$ for all $k.$ 
 Let $Y$ be a manifold of general type o-homeomorphic to $X.$  Then $n$ is even.
 If furthermore  $$ \dim H^0(X,mL_X) \ne 1$$ for all positive integers $m$ and
 if $L_Y$ denotes the ample generator, then
 $$ K_Y = rL_Y.$$
 \end{proposition}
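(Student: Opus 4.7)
The plan is to pin down the Picard class of $K_Y$ by combining the Hirzebruch--Kodaira comparison of Proposition~\ref{comparechern} with a Hodge-theoretic computation of $\chi(Y,\sO_Y)$. First I would set up notation. Since $X$ is Fano and hence simply connected, so is $Y$; in particular $H^{2}(Y,\Z)$ is torsion-free, $b_2(Y)=b_2(X)=1$, and therefore ${\rm Pic}(Y)=\Z L_Y$ for an ample generator $L_Y$. Because $Y$ is of general type, $K_Y=ML_Y$ for some $M\in\Z_{>0}$ and $K_Y$ is automatically ample. Applying Proposition~\ref{comparechern} gives an integer $s$ with $c_1(Y)=\varphi^{*}(c_1(X)+2sc_1(L_X))$ together with the polynomial identity $\chi(Y,mL_Y)=\chi(X,(m+s)L_X)$. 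Writing $\varphi^{*}c_1(L_X)=\epsilon c_1(L_Y)$ with $\epsilon\in\{\pm 1\}$, orientation-preservation gives $\epsilon^{n}L_Y^{n}=L_X^{n}>0$, so $\epsilon=1$ whenever $n$ is odd; translated into Picard classes this becomes $-M=\epsilon(r+2s)$.

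Next I would compute $\chi(Y,\sO_Y)=1$ by Hodge theory. Since $L_Y$ is ample, $L_Y^{j}$ is a nonzero class in $H^{j,j}(Y)$ for every $0\le j\le n$, so $h^{j,j}(Y)\ge 1$. Combined with $b_2(Y)=1$ and the hypothesis $b_{2j}(X)\le 2$ for $j>1$, the Hodge decomposition $b_{2j}(Y)=h^{j,j}(Y)+2\sum_{p<j}h^{p,2j-p}(Y)$ forces every off-diagonal term $h^{p,2j-p}(Y)$ with $p<j$ to vanish, and the vanishing odd Betti numbers handle the remaining off-diagonal terms. In particular $h^{i}(\sO_Y)=h^{0,i}(Y)=0$ for all $1\le i\le n$, so $\chi(Y,\sO_Y)=1$. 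Setting $m=0$ in the HK identity then yields $\chi(X,sL_X)=1$.

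I would then solve $\chi(X,sL_X)=1$ using the standard piecewise formula for the Hilbert polynomial of an index-$r$ Fano manifold: by Kodaira vanishing and Serre duality, $\chi(X,kL_X)$ equals $h^{0}(X,kL_X)$ for $k\ge 0$, vanishes for $-r<k<0$, equals $(-1)^{n}$ at $k=-r$, and equals $(-1)^{n}h^{0}(X,-(r+k)L_X)$ for $k<-r$. The standing assumption $\dim H^{0}(X,mL_X)\ne 1$ for every $m>0$ therefore leaves only the candidates $s=0$ (always) and, when $n$ is even, $s=-r$ (because $(-1)^{n}=1$).

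Finally I would conclude. If $n$ is odd, only $s=0$ is available and $\epsilon=1$, which gives $M=-r<0$ and contradicts $M>0$; hence $n$ must be even. For $n$ even, the only sign pairings $(s,\epsilon)$ producing $M>0$ are $(0,-1)$ and $(-r,1)$, and both yield $M=r$. Thus $K_Y=rL_Y$, as claimed. The main obstacle I expect is the Hodge-theoretic vanishing in the second paragraph: one must carefully use that each $L_Y^{j}$ occupies its slot on the $(j,j)$-diagonal, so that the Betti bound $b_{2j}(Y)\le 2$ genuinely forces all of the off-diagonal Hodge numbers, and in particular $h^{0,2j}(Y)$, to vanish simultaneously.
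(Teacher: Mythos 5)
Your proof is correct and follows essentially the same route as the paper: both arguments compute $\chi(Y,\sO_Y)=1$ from the Betti-number hypotheses via Hodge decomposition and the nonvanishing of $L_Y^j$ in $H^{j,j}(Y)$, reduce to $\chi(X,sL_X)=1$ via Proposition \ref{comparechern}, and then use Kodaira vanishing, Serre duality and the hypothesis on $h^0(X,mL_X)$ to pin down $s$ and hence $K_Y$. The only (minor) difference is that you enumerate all candidate pairs $(s,\epsilon)$ with $\varphi^*L_X=\epsilon L_Y$, whereas the paper asserts $s<0$ at the outset and only addresses the sign ambiguity $\varphi^*L_X=\pm L_Y$ in its last line; your treatment of that point is, if anything, slightly more careful.
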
 
 
 \begin{proof} By our assumption on the Betti numbers of $X,$ Hodge decomposition and Hodge symmetry, we obtain
 $$ H^q(Y,\mathcal O_Y) = 0 $$ for $q > 0,$ in particular $\chi(Y,\mathcal O_Y) = 1.$ Let $s$ be as in Propositon \ref{comparechern}, so that 
 $$ \chi(Y,\mathcal O_Y) = \chi(X,sL_X)\,\, .$$ 
Consequently, $\chi(X,sL_X) = 1.$
 Since $s < 0$ by our assumptions, Kodaira vanishing gives 
 $$ (-1)^n h^n(X,sL_X) = 1\,\, .$$ 
 In particular, $n$ is even. By Serre duality, 
  we deduce 
 $$ h^0(X,(-s-r)L_X) = 1\,\, .$$ 
 Hence our assumption on the number of sections in $mL_X$ gives $-s = r,$ hence
 $$-K_Y = \varphi^*(-K_X + 2sL_X) = \varphi^*(-rL_X) = -r \varphi^*(L_X)\,\, .$$
A priori, $\varphi^*(L_X) = \pm L_Y.$ As $r = -s > 0$ and $Y$ is of general type, it follows that $\varphi^*(L_X) = L_Y.$ This proves our claim.
  \end{proof}
  
  As a special case, we have

\begin{corollary} \label{thm2b}  Let $X$ be an  $n$-dimensional Fano manifold of index $r$ with the same Betti numbers as $\mathbb P^n$ or the quadric $Q_n$ and ample generator $L_X.$ 
Assume that $H^0(X,mL_X) \ne 1$ for all positive integers $m.$ 
Let $Y$ be o-homeomorphic to $X$ and assume that $K_Y$ is ample. Then $n$ is even. Moreover, if 
$L_Y$ denotes the ample generator of  $Y,$ then $K_Y = (n+1)L_Y$ resp. $K_Y = nL_Y.$ 
\end{corollary}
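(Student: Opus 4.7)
The plan is to derive this Corollary as a direct specialization of Proposition \ref{propindex}, so the whole exercise reduces to checking the numerical hypotheses of that proposition in the two stated cases and then identifying the Fano index of the reference variety.

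First I would verify the Betti number hypotheses of Proposition \ref{propindex}. Both $\mathbb P^n$ and the smooth quadric $Q_n$ (with $n \geq 3$, which is forced by $b_2(X) = 1$) have vanishing odd Betti numbers by the Bruhat-cell decomposition. For $\mathbb P^n$ all even Betti numbers equal $1$; for $Q_n$ every even Betti number equals $1$ except for the middle one when $n$ is even, which equals $2$. In either case $b_2(X) = 1$ and $b_{2k}(X) \leq 2$ for all $k > 1$, and all odd Betti numbers vanish, matching the hypothesis of Proposition \ref{propindex} precisely. The remaining hypothesis, $\dim H^0(X, m L_X) \neq 1$ for all $m \geq 1$, is part of the assumptions of the Corollary itself, so no further check is needed.

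Proposition \ref{propindex} then applies directly: $n$ is even, and $K_Y = r L_Y$ where $r$ is the Fano index of $X$. Finally I would identify $r$ in the two cases. The Fano index of $\mathbb P^n$ is $n+1$ since $-K_{\mathbb P^n} = (n+1)H$ with $H = L_X$ the ample generator, while the Fano index of $Q_n$ is $n$ by adjunction on the smooth quadric hypersurface in $\mathbb P^{n+1}$. Substituting these values yields the two stated conclusions $K_Y = (n+1)L_Y$ and $K_Y = nL_Y$.

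There is essentially no obstacle here beyond bookkeeping; the nontrivial content is entirely contained in Proposition \ref{propindex}. The one small point worth flagging is that ``same Betti numbers as $\mathbb P^n$ or $Q_n$'' is used only to feed Hodge decomposition into Proposition \ref{propindex}, while the explicit constants $n+1$ and $n$ in the conclusion come from the Fano indices of the two reference varieties; this is the only place where one actually uses $X$ to be $\mathbb P^n$ or $Q_n$ (rather than just cohomologically similar), or, equivalently, invokes Kobayashi-Ochiai to match the numerical invariants.
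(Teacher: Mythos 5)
Your proposal is correct and follows exactly the route the paper intends: the paper offers no separate argument for Corollary \ref{thm2b} beyond declaring it ``a special case'' of Proposition \ref{propindex}, and your verification of the Betti-number hypotheses for $\mathbb P^n$ and $Q_n$ together with the identification of the Fano indices $n+1$ and $n$ is precisely the implicit bookkeeping. Your closing remark that the explicit constants really require $r=n+1$ resp.\ $r=n$ (i.e.\ Kobayashi--Ochiai, not merely equality of Betti numbers) is a fair and slightly more careful reading than the statement itself.
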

  
Our main result in this section is concerned with del Pezzo $4$-folds of degree $5.$

\begin{theorem} \label{thm2a} Let $X$ be a del Pezzo  $4$-fold of degree $5.$ Explicitly, $X$ is a smooth complete intersection of the form
$$X = G(2,5) \cap H_1 \cap H_2\,\, ,$$ 
where $G(2,5) \subset  \mathbb P^9$ is the Grassmannian, embedded by Pl\"ucker, 
and $H_j$ are general hyperplane sections. Assume that the projective manifold $Y$ is o-homeomorphic to $X.$ Then 

\begin{enumerate}
\item $Y$ is either deformation equivalent to $X$ or a smooth projective $4$-fold with ample $K_Y$ and with the same Hodge numbers as $X.$ Moreover all  Chern numbers (see Sections \ref{sprep} and \ref{s6} for the definition)
 on $X$ and $Y$ agree. 

\item Assume in addition that $Y$ is log o-homeomorphic to $X.$ Then $Y$ is isomorphic to a del Pezzo $4$-fold of degree $5.$ 
\end{enumerate}

\end{theorem}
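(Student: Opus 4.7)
The plan is to adapt the proof of Theorem \ref{thm2} (cubic 4-folds) to the del Pezzo quintic setting, with the technical heart streamlined by Proposition \ref{propindex}. Let $\varphi: Y \to X$ denote the given o-homeomorphism. The Lefschetz hyperplane theorem applied to $X \subset G(2,5) \cap H_1 \subset G(2,5)$ gives $\pi_1(X) = 0$, $b_2(X) = 1$, ${\rm Pic}(X) = \ZZ H_X$ with $H_X^4 = 5$, and $-K_X = 3H_X$ (so the Fano index is $r = 3$). By o-homeomorphy, $Y$ inherits $\pi_1(Y) = 0$, $b_2(Y) = 1$, and ${\rm Pic}(Y) = \ZZ L_Y$ for an ample generator $L_Y$. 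Theorem \ref{cor:thm1} rules out $c_1(Y) = 0$, and Theorem \ref{xxx} shows that if $-K_Y$ is ample then $Y$ is deformation equivalent to $X$; so the remaining task is to treat the case $K_Y$ ample.

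For Part (1), I first compute the Betti numbers of $X$ via Lefschetz, Poincar\'e duality, and a direct calculation of $c_4(X) = \chi_{top}(X)$ from the tangent sequence $0 \to T_X \to T_{G(2,5)}|_X \to \sO_X(H_X)^{\oplus 2} \to 0$ coupled with $T_{G(2,5)} = S^* \otimes Q$. This yields $b_{2k+1}(X) = 0$ for every $k$, $b_0(X) = b_2(X) = b_6(X) = b_8(X) = 1$, and $b_4(X) = 2$. In particular $b_{2k}(X) \leq 2$ for every $k > 1$, so Proposition \ref{propindex} applies: since $L_X$ is very ample (embedding $X$ in $\PP^7$) with $h^0(X, mL_X) \geq 8 > 1$ for all $m \geq 1$, the proposition returns $K_Y = 3L_Y$. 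All Chern numbers of $Y$ then follow from Pontryagin invariance (Theorem \ref{thm11}): from $p_1(Y) = \varphi^* p_1(X)$ together with $c_1(Y)^2 = \varphi^* c_1(X)^2$ one gets $c_2(Y) = \varphi^* c_2(X)$; the identity $c_4(Y) = c_4(X)$ follows from Euler-characteristic invariance; and $p_2(Y) = \varphi^* p_2(X)$ combined with $c_1(Y) = -3L_Y$ yields $L_Y \cdot c_3(Y) = -H_X \cdot c_3(X)$. The Hodge diamond is transferred via Hodge symmetry, the signature identity $\tau(Y) = 2h^{4,0}(Y) - 2h^{3,1}(Y) + h^{2,2}(Y)$ (equal to $\tau(X) = 2$), and $h^{1,1}(Y) = 1$, forcing $h^{4,0}(Y) = h^{3,1}(Y) = 0$ and $h^{2,2}(Y) = 2$.

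For Part (2), assume in addition that $\varphi(D_Y) = D_X$ for smooth divisors $D_Y \in |dL_Y|$, $D_X \in |dH_X|$ with $d \geq 1$. The restriction $\varphi|_{D_Y}$ is an o-homeomorphism of smooth projective $3$-folds, so $c_3(D_Y) = \chi_{top}(D_Y) = \chi_{top}(D_X) = c_3(D_X)$. Compute each side from the adjunction sequence $0 \to T_{D_\bullet} \to T_\bullet|_{D_\bullet} \to \sO_{D_\bullet}(d) \to 0$, inserting $c_1(X) = 3H_X$ versus $c_1(Y) = -3L_Y$ together with the matching $c_2, c_3$ from Part (1). Using $L_Y^2 c_2(Y) = H_X^2 c_2(X)$ and $L_Y c_3(Y) = -H_X c_3(X)$, the identity $c_3(D_Y) = c_3(D_X)$ simplifies to the scalar equation $H_X \cdot c_3(X) = -15 d^2$. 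A short Schubert-calculus calculation on $G(2,5)$ using Pieri's rule yields $c_2(T_{G(2,5)}) = 11\sigma_1^2 + \sigma_{11}$ and $c_3(T_{G(2,5)}) = 15\sigma_1^3$, from which $H_X \cdot c_3(X) = 16$; the equation becomes $16 = -15 d^2$, which admits no positive integer solution. Hence $K_Y$ cannot be ample, and by Part (1) $Y$ is deformation equivalent to $X$, i.e.\ a del Pezzo $4$-fold of degree $5$.

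The main obstacle reduces to the two concrete Schubert-calculus computations $c_4(X) = 6$ (equivalently $b_4(X) = 2$) and $H_X \cdot c_3(X) = 16$. The first enables invocation of Proposition \ref{propindex}, bypassing the ad hoc argument needed for cubic 4-folds; the second makes the polynomial comparison in Part (2) collapse to an immediate contradiction. Both reduce to elementary manipulations with Schubert classes in the $2 \times 3$ Young diagram of $G(2,5)$, which can be cross-checked by Riemann--Roch (e.g.\ $\chi(X, \sO_X) = 1$ and $\chi(X, L_X) = 8 = h^0(X, L_X)$).
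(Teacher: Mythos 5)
Your proposal is correct and follows the same overall architecture as the paper's proof: Lefschetz plus $b_4(X)=2$ to pin down the Hodge numbers, Theorem \ref{cor:thm1} and Theorem \ref{xxx} to reduce to the case $K_Y$ ample, Proposition \ref{propindex} to get $K_Y=3L_Y$, Pontrjagin/Euler invariance plus Riemann--Roch to match Chern numbers, and for Part (2) a comparison of $c_3$ of the o-homeomorphic divisors via the tangent-bundle sequences. Two points where you deviate are worth recording. First, you compute $b_4(X)=2$ (equivalently $c_4(X)=6$) directly by Schubert calculus, where the paper simply cites \cite[4.7]{PZ16}; your cross-check via $\chi(X,\sO_X)=1$ is consistent (with $c_1^4=405$, $c_1^2c_2=198$, $c_2^2=97$, $c_1c_3=48$, $c_4=6$ one indeed gets $720/720=1$). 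Second, and more substantively, in Part (2) you use the relation $L_Y\cdot c_3(Y)=-H_X\cdot c_3(X)$, which is the correct consequence of $c_1(Y)c_3(Y)=c_1(X)c_3(X)$ once $c_1(Y)=-3L_Y$ while $c_1(X)=3H_X$; the paper instead asserts $L\cdot c_3(X)=L\cdot c_3(Y)$, which is off by a sign (it would force $H_X\cdot c_3(X)=0$, whereas the true value is $16$). With the paper's sign the contradiction is the clean identity $(d-3)d^2=(d+3)d^2$; with the correct sign one instead needs the explicit value $H_X\cdot c_3(X)=16$ to land on $16=-15d^2$, which is exactly what you do, and your Schubert computations $c_2(T_{G(2,5)})=11\sigma_1^2+\sigma_{11}$, $c_3(T_{G(2,5)})=15\sigma_1^3$, $c_3(X)=4\sigma_1^3-2\sigma_{21}$, $H_X\cdot c_3(X)=16$ all check out. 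So your version is not only valid but repairs a sign slip in the published argument; the only cost is that the contradiction in Part (2) now genuinely depends on the numerical value of $H_X\cdot c_3(X)$ rather than being independent of it.
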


\begin{proof} 
Since our proof is similar to the proof of Theorem \ref{thm2}, we omit standard details. 
Let $$X = G(2,5) \cap H_1 \cap H_2 $$ be a del Pezzo  $4$-fold of degree $5$ and $\varphi: Y \to X$ be an o-homeomorphism from a projective manifold $Y.$
Let $L_X$ and $L_Y$ denote the ample generators. 
 First observe that by \cite[4.7]{PZ16}, 
$$ b_4(Y) =  b_4(X) =  2\,\, .$$ 
Moreover,  $b_1(X) = b_3(X) = 0$ and $b_2(X) = 1$ by Lefschetz.  
Hence by Hodge decomposition and Hodge symmetry, all Hodge numbers of $X$ and $Y$ agree.  
Assume that $Y$ is not Fano. By Theorem \ref{cor:thm1}, $K_Y$ is ample. 
Thus Proposition \ref{propindex} applies and 
$$ K_Y = 3L_Y\,\, .$$ 
Using the invariance of the Pontrjagin  classes and the topological Euler numbers (giving three conditions on the Chern numbers), and the equality
$\chi(Y,\mathcal O_Y)  = \chi(X, \mathcal O_X) = 1$ in connection with  Riemann-Roch (giving two additional conditions on the Chern numbers),
we see that all  Chern numbers, i.e., the quantities 
$$c_1^4\,\, ,\,\, c_1^2c_2\,\, ,\,\, c_1c_3\,\, ,\,\,  c_2^2\,\, ,\,\, c_4$$ 
of  $Y$ and $X$ agree. \\
If $Y$ is Fano, then Theorem \ref{xxx} applies, and $Y$ is deformation equivalent to $X,$ 
This shows Part (1).

Part (2) is demonstrated similarly as in Theorem \ref{thm2}(2).
In fact, if $(X,V)$ and $(Y, W)$ are log o-homeomorphic, then there is a positive integer $d$ such that $V \in |dL_X|$ and $W \in |dL_Y|.$ Suppose $Y$ is of general type. 
In the following, we will identify $H^*(X,\mathbb C) $ and $H^*(Y,\mathbb C)$ and write $L$ for short instead of $L_X$ and $L_Y.$ 
Since $V$ and  $W$ are o-homeomorphic, $c_3(V) = c_3(W).$ 
Using the tangent bundle sequences for $V \subset X$ and $W \subset Y,$ we obtain
$$ c_3(V) = d L \cdot c_3(X) - d L_V c_2(V)$$
and 
$$ c_3(W) = d L \cdot c_3(Y) - d L_W c_2(W)\,\, .$$
By the equality of Chern numbers, we have $L \cdot c_3(X) = L \cdot c_3(Y), $ hence
$$ L_V \cdot c_2(V) = L_W \cdot c_2(W)\,\, .$$ 
Using again the tangent bundle sequences, we derive
$$ L_V c_2(V) = d L^2 c_2(X) + (d-3)d^2 L^4 $$
and 
$$ L_W c_2(W) =  dL^2 c_2(Y) + (d+3) d^2 L^4\,\, .$$ 
Again by the equality of Chern numbers, we have $L^2 c_2(X) = L^2 c_2(Y), $ hence
$$ (d-3)d^2 = (d+3)d^2\,\, ,$$
which is absurd. This completes the proof. 
\end{proof}

\section{Ricci-flat Manifolds: Proof of Theorem \ref{thm3}}\label{s4}

\noindent 
{\it Proof of Theorem \ref{thm3}(1).} \\
Let $X$ be a Calabi-Yau $2n$-fold and $Y$ a hyperk\"ahler $2n$-fold with  $n \geq 2.$ 
Assume that there is an o-homeomorphism $$\varphi : X \to Y\,\, .$$
By definition, $K_X = 0$ and $K_Y = 0$ in the Picard groups ${\rm Pic}\, (X)$ and ${\rm Pic}\, (Y).$ Hence $w_2(X) = 0$ and $w_2(Y) = 0$ and by Proposition  \ref{campana} (1), we obtain 
$$ \chi(X,\mathcal O_X) = \chi(Y,\mathcal O_Y)\,\, .$$  
However, this
is in contradiction to 
$$\chi(\sO_Y) = \sum_{p} (-1)^ph^{0, p}(Y) = \sum_{p} (-1)^ph^{p, 0}(Y)  = n+1 \geq 3$$ 
for any hyperk\"ahler $2n$-fold $Y$  with $n \ge 2,$ while 
$$\chi(\sO_X) = \sum_{p} (-1)^ph^{0, p}(Y) = \sum_{p} (-1)^ph^{p, 0}(Y) = 2$$ 
for any Calabi-Yau $2n$-fold $X.$

\vskip .2cm \noindent {\it  Proof of Theorem \ref{thm3}(2).}  \\
By our assumption, $\pi_1(X) = 0.$ 
Let 
$$ X \simeq  \prod_{j=1}^r  M_j \times \prod_{k=1}^s N_k $$
be the Beauville-Bogomolov decomposition  (\cite{Be83}) with $M_j$ Calabi-Yau  manifolds and $N_k$ hyperk\"ahler. 
Since $\chi(Y,\mathcal O_Y) = n+1$ and since $\chi(X,\mathcal O_X) = \chi(Y,\mathcal O_Y)$ by Proposition \ref{campana} (1),
we obtain 
$$ \chi(X,\mathcal O_X)  =  n+1\,\, .$$
In particular, $\chi(M_j, \mathcal O_{M_j}) \not= 0.$ Hence the Calabi-Yau manifolds $M_j$ are of even dimension $\ge 2$ and we have $\chi(M_j, \mathcal O_{M_j}) = 2.$ Therefore
$$ n+1 = \chi(X,\mathcal O_X) =  \prod_{j=1}^r \chi(M_j, \mathcal  O_{M_j}) \times \prod_{k=1}^s \chi(N_k,\mathcal O_{N_k}) =  2^r \cdot  \prod_{k=1}^s (n_k+1)\,\, ,$$ 
where $ 2n_k = \dim N_k.$ Since $n$ is even, we must have $r = 0.$ Then 
$s = 1.$ 
Indeed, by subtracting 
$$n = \sum_{k=1}^{s}n_k$$
from 
$$ n+1 = 2^0 \cdot \prod_{k=1}^{s}(n_k+1) = \prod_{k=1}^{s}(n_k+1)\,\, ,
$$
it follows that $0 \ge n_1n_2 \ldots n_s$ if $s \ge 2,$ a contradiction to $n_k \ge 1.$ 
This completes the proof of Theorem \ref{thm3}  (2).

\section{Hyperk\"ahler Fourfolds: Proof of Theorem \ref{thm4}}\label{s5}
Using the same notation as in Theorem \ref{thm4}, 
we may assume $X = S^{[2]}$ for a projective K3 surface $S.$ Let $Y$ be a smooth projective $4$-fold being o-homeomorphic to $X$ via an o-homeomorphism $\varphi : X \to Y.$ 

The next lemma is well-known (see eg. \cite{Gu01}):

\begin{lemma}\label{lem51} 

\begin{enumerate}

\item The Betti numbers $b_k(X)$ and the topological Euler number $c_4(X)$ 
of $X$ are as follows:

$b_0(X) = b_8(X) = 1,$ $b_1(X) = b_3(X) = b_5(X) = b_7(X) = 0,$ 

$b_2(X) = b_6(X) = 23$ and $b_4(X) = 276,$ in particular, $c_4(X) = 324.$

\item The Hodge numbers $h^{p, q}(X)$ of $X$ are as follows:

$h^{0,0}(X) = h^{4,4}(X) = 1,$ $h^{p, q}(X) = 0$ for $p+q = 1, 3, 5, 7,$ 

$h^{2,0}(X) = h^{0,2}(X) = 1,$ $h^{1,1}(X) = 21,$ $h^{2,4}(X) = h^{4,2}(X) = 1,$ $h^{6,6}(X) = 21,$ 

$h^{4,0}(X) = h^{0,4}(X) = 1,$ $h^{3,1}(X) = h^{1,3}(X) = 21$ and $h^{2,2}(X) = 232.$ 

\item The cup product,  i.e., the  multiplication map
 $${\rm Sym}^2H^2(X, \Q) \to H^4(X, \Q)$$ is an isomorphism.

\item There is an integral quadratic form $q_X(x) \in {\rm Sym}^2(H^2(X, \Z)^*)$ such that 
$$E^4 = 3 q_X(E)^2$$ 
for all $E \in H^2(X, \Z).$ Moreover, if $q_X(E) = 0,$ then $ E^3 = 0$ in $H^6(X, \Z).$ 

\end{enumerate}
\end{lemma}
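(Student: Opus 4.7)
The plan is to verify each assertion by combining standard results on Hilbert schemes of K3 surfaces with general hyperkähler theory; since $X = S^{[2]}$ is a fixed model, the arguments are essentially computational.

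For parts (1) and (2), I would invoke Göttsche's formula for the Hodge numbers of $S^{[n]}$ in terms of those of $S$, specialised to $n = 2$ and the Hodge diamond of a K3 surface. Alternatively, one may proceed directly: $S^{[2]}$ is the $\mathbb{Z}/2$-quotient of the blow-up $\mathrm{Bl}_{\Delta}(S \times S)$ under the involution swapping factors, the exceptional divisor being a $\mathbb{P}^1$-bundle over $\Delta \simeq S$. Applying Künneth on $S \times S$, the blow-up formula, and passing to $\mathbb{Z}/2$-invariants then yields the Betti and Hodge numbers as stated. The topological Euler number follows as $c_4(X) = 2 + 2\cdot 23 + 276 = 324$.

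For part (3), I would invoke Verbitsky's theorem: on any hyperkähler manifold, the canonical ring homomorphism $\mathrm{Sym}^*H^2(X,\Q) \to H^*(X,\Q)$ is injective in degrees $\le \dim X$. In our case, $\dim_\Q \mathrm{Sym}^2 H^2(X,\Q) = \binom{24}{2} = 276 = b_4(X)$, so injectivity in degree $4$ combined with the equality of dimensions gives the claimed isomorphism.

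For part (4), the existence of the quadratic form $q_X \in \mathrm{Sym}^2(H^2(X,\Z)^*)$ and of a Fujiki relation $E^{2n} = c_X\, q_X(E)^n$ on any hyperkähler $2n$-fold is due to Beauville \cite{Be83} and Fujiki; integrality on $S^{[2]}$ is part of Beauville's original construction. The constant $c_X = 3$ for $S^{[2]}$ is the classical Fujiki constant $(2n)!/(n!\,2^n)$ evaluated at $n = 2$, and may be pinned down directly by testing both sides on the pullback of an ample class from $S$. The final implication then follows by polarisation of $E^4 = 3\,q_X(E)^2$: writing $q_X(\cdot,\cdot)$ for the associated symmetric bilinear form, one obtains
$$E^3 \cdot F \;=\; 3\, q_X(E)\, q_X(E,F) \qquad \text{for all } F \in H^2(X,\Q).$$
Hence $q_X(E) = 0$ forces $E^3 \cdot F = 0$ for every $F \in H^2(X,\Q)$, and Poincaré duality implies $E^3 = 0$ in $H^6(X,\Q)$. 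The main technical effort lies in unwinding Göttsche's formula at $n = 2$ and in fixing the Fujiki constant, both of which are routine; I do not expect a serious obstacle beyond reorganising these standard inputs.
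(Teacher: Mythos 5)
Your proposal is correct and follows the paper's strategy almost exactly: the Betti and Hodge numbers via the description of $S^{[2]}$ as the $\Z/2$-quotient of ${\rm Bl}_{\Delta}(S\times S)$ (the paper does this direct computation rather than quoting G\"ottsche, and it obtains $b_4$ from the Euler number rather than the other way around, but these are cosmetic differences), part (3) via Verbitsky's injectivity theorem plus the dimension count $\dim {\rm Sym}^2 H^2 = 276 = b_4$, and part (4) via the Fujiki relation with constant $3$. The one genuine divergence is the last assertion of (4): the paper deduces $q_X(E)=0 \Rightarrow E^3=0$ from Verbitsky's structural description of the kernel of ${\rm Sym}^{\bullet}H^2(X,\Q)\to H^{\bullet}(X,\Q)$ in degrees above the middle (the kernel is the ideal generated by the classes $x^{n+1}$ with $q_X(x)=0$, so $x^3=0$ already for $n=2$), whereas you polarise the Fujiki relation to get $E^3\cdot F = 3\,q_X(E)\,q_X(E,F)$ and conclude by Poincar\'e duality. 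Your argument is more elementary and self-contained, since it uses only the quartic identity already in hand rather than the finer kernel statement; the paper's route has the advantage of quoting a single theorem that simultaneously yields (3) and the implication in (4). Two small points: your polarisation argument as written gives $E^3=0$ only in $H^6(X,\Q)$, while the lemma asserts vanishing in $H^6(X,\Z)$, so one should add that $H^6(S^{[2]},\Z)$ is torsion-free (or note that rational vanishing is all that is used later); and the displayed equality $c_4(X)=2+2\cdot 23+276$ should not be read as an independent derivation, since it presupposes the Betti numbers you have just computed.
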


\begin{proof} For the convenience of the reader, we sketch an outline of proof. The first assertion  of (4) is the so called Fujiki's relation, stating the following. Given  a hyperk\"ahler $2n$-fold $M,$  
 let $q_M(x)$ be the Beauville-Bogomolov-Fujiki form. Then there is a positive constant $c_M$ such that 
$$(x^{2n})_M = c_Mq_M(x)^{n}$$ for all $x \in H^2(M, \Z).$ See \cite{Fu87} and  \cite[Proposition 23.14]{GHJ03}  for details.

Let ${\rm Bl}_{\Delta_S} (S \times S)$ be the blow up of $S \times S$ along the diagonal $\Delta \simeq S$ and $\tilde{\iota}$ the involution of ${\rm Bl}_{\Delta_S} (S \times S)$ induced by the involution $(P, Q) \mapsto (Q, P)$ on $S \times S.$ Then 
$$X = S^{[2]} = {\rm Bl}_{\Delta_S} (S \times S)/ \tilde{\iota}\,\, .$$ 
The assertion (1) follows from a standard calculation based on the K\"unneth formula applied {\blue to} $S \times S.$ By $b_k(S) = 0,$ and hence $b_k(S \times S) = 0$ if $k$ is odd, it follows that $b_k(X) = 0$ if $k$ is odd. We have  
$$\chi_{\rm top}(X) = \frac{\chi_{\rm top}(S) \times \chi_{\rm top} (S) - \chi_{\rm top} (S)}{2} + 2\chi_{\rm top} (S) = \frac{24 \times 23}{2} + 48 = 324\,\, .$$
Combining this with $b_{6}(S^{[2]}) = b_2(S^{[2]}) = b_2(S) + 1 = 23,$ we obtain $b_4(S^{[2]}) =  276.$ 

Let $M$ be a smooth projective manifold of dimension $2n.$ Consider the natural graded ring homomorphism 
$$m := \oplus m_{{ k}} : \oplus_{{ k} \ge 0} {\rm Sym}^{{ k}}H^2(M, \Q) \to \oplus_{n \ge 0}  H^{2k}(M, \Q)\,\, ,$$
 given by the cup product. { Given a} hyperk\"ahler manifold $M$ { of dimension $2n$}, Verbitsky (\cite{Ve96}, see also \cite[Proposition 24.1]{GHJ03}), shows that the  graded ring homomorphism { $m$ is} injective in { degree $2k \le 2n$} and the kernel, 
which is in { degree $2k > 2n$}, is the graded ideal generated by all the elements $x^{{ n}+1}$ with $x \in H^2(M, \Q)$ such that $q_{ M}(x) = 0.$ 

Apply this for our $X.$ { Let $x \in H^2(X, \Z).$} If $q_X(x) = 0,$ then $x^3 = 0,$ as $\dim\, X = 4.$ This shows the last assertion of (4). We also have $b_4(X) = 276$ and $b_2(X) = 23.$ So, 
$$\dim\, {\rm Sym}^2H^2(X, \Q) = (23+1)\cdot 23/2 = 276 = b_4(X)\,\, .$$ 
Hence ${\rm Sym}^2H^2(X, \Q) = H^4(X, \Q).$ 

Note that $h^{2, 0}(X) = h^{0, 2}(X) = 1,$ $h^{1, 1}(X) = h^{1, 1}(S) + 1 = 21$ and $b_{p+q}(X) = 0,$ whence $h^{p, q}(X) = 0,$ when $p+q$ is odd. Hence the assertions (2) follows from (3) by the same argument as the proof of Lemma \ref{lem52} below. 
\end{proof}

The Betti numbers $b_k(Y)$ and the topological Euler number $c_4(Y)$ are the same as $b_k(X)$ and $c_4(X),$ as $Y$ is o-homeomorphic to $X.$ However, in general, Hodge numbers are not preserved even under o-diffeomorphisms. Nevertheless we have 

\begin{lemma}\label{lem52} 

\begin{enumerate}

\item The Hodge numbers $h^{p, q}(Y)$ of $Y$ are the same as the Hodge numbers of $X,$ i.e., 

$h^{0,0}(Y) = h^{4,4}(Y) = 1,$ $h^{p, q}(Y) = 0$ for $p+q = 1, 3, 5, 7,$ 

$h^{2,0}(Y) = h^{0,2}(Y) = {\blue 1},$ $h^{1,1}(Y) = 21,$ $h^{2,4}(Y) = h^{4,2}(Y) = {\blue 1} ,$ $h^{6,6}(Y) = 21,$  

$h^{4,0}(Y) = h^{0,4}(Y) = 1,$ $h^{3,1}(Y) = h^{1,3}(Y) = 21$ and $h^{2,2}(Y) = 232.$ 

{ \item  The multiplication map 
$${\rm Sym}^2H^2(Y, \Q) \to H^4(Y, \Q)$$ is an isomorphism.} 

\item There is an integral quadratic form  $q_Y(x)$ on $H^2(Y, \Z)$ such that $(E^4)_Y = 3 q_Y(E)^2$ for all $E \in H^2(Y, \Z)$ { and $E^3 = 0$ if $q_Y(E) = 0.$} In particular, $(E^4)_Y$ is 
a non-negative integer for any $E \in H^2(Y, \Z).$ 

\item $K_Y$ is $2$-divisible in ${\rm Pic}\, (Y).$ 

\end{enumerate}
\end{lemma}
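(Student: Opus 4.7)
The plan is to deduce (4) from Corollary \ref{cor11}, then transport the cohomology ring structure from $X$ to $Y$ via $\varphi$ to obtain (2) and (3), and finally combine (2) viewed as a Hodge morphism with signature invariance to pin down all Hodge numbers in (1). For (4), since $\pi_1$ is a topological invariant and $X = S^{[2]}$ is simply connected, so is $Y$; as $K_X = 0$ is trivially $2$-divisible in $\mathrm{Pic}(X)$, Corollary \ref{cor11} immediately gives $2$-divisibility of $K_Y$ in $\mathrm{Pic}(Y)$.

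For (2), the cup product is a topological invariant, so there is a commutative square
\[
m_Y \circ \mathrm{Sym}^2(\varphi^*) \;=\; \varphi^* \circ m_X
\]
with vertical arrows isomorphisms; since $m_X$ is an isomorphism by Lemma \ref{lem51}(3), so is $m_Y$. For (3), define $q_Y(E) := q_X((\varphi^*)^{-1}(E))$ for $E \in H^2(Y,\Z)$, using that $\varphi^*\colon H^2(X,\Z) \to H^2(Y,\Z)$ is an isomorphism; the identities $E^4 = 3\, q_Y(E)^2$ and $E^3 = 0$ when $q_Y(E) = 0$ then transport directly from Lemma \ref{lem51}(4) via the ring-isomorphism property of $\varphi^*$.

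For (1), the essential new input is that $m_Y$ is not only an isomorphism but also a morphism of pure Hodge structures of weight $4$, since cup product on a compact K\"ahler manifold preserves Hodge type. Writing $a := h^{2,0}(Y)$ and $b := h^{1,1}(Y)$, the Hodge decomposition of $\mathrm{Sym}^2 H^2(Y,\C)$ compared with that of $H^4(Y,\C)$ forces
\[
h^{4,0}(Y) = \binom{a+1}{2}, \qquad h^{3,1}(Y) = ab, \qquad h^{2,2}(Y) = a^2 + \binom{b+1}{2}.
\]
Combining the Betti number identity $2a + b = b_2(Y) = 23$ with the topological invariance of the signature $\tau(Y) = \tau(X) = 156$---which via Hirzebruch's formula $\tau = \sum_{p,q}(-1)^q h^{p,q}$ on compact K\"ahler manifolds yields $h^{3,1}(Y) = 2a + 19$---produces the Diophantine equation $a(23 - 2a) = 2a + 19$, i.e.\ $2a^2 - 21a + 19 = 0$, whose only non-negative integer root is $a = 1$. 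All remaining Hodge numbers are then forced and agree with those of $X$ recorded in Lemma \ref{lem51}. The main conceptual obstacle is to recognize that (2) delivers a Hodge-theoretic isomorphism, after which (1) collapses to a short Diophantine check combining Betti numbers and signature.
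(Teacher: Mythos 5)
Your proposal is correct and follows essentially the same route as the paper: part (4) from Corollary \ref{cor11}, parts (2) and (3) by transporting the cup-product structure and the Beauville--Bogomolov--Fujiki form along the o-homeomorphism, and part (1) by using the ${\rm Sym}^2$-isomorphism as a morphism of Hodge structures together with $2a+b=23$ and the signature $156$ to arrive at the same quadratic $2a^2-21a+19=0$ with unique integer root $a=1$.
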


\begin{proof}

As $Y$ is homeomorphic to $X,$ it follows that $b_k(Y) = b_k(X)$ for each non-negative integer $k.$ Thus $b_1(Y) = 0.$ As $Y$ is smooth projective manifold, it follows from the 
Hodge decomposition theorem that $h^{1, 0}(Y) = h^{0,1}(Y) = 0.$ Similarly, $h^{p, q}(Y) = 0$ for $p+q = 3, 5, 7$ and $h^{0, 0}(Y) = h^{4, 4}(Y) = 1.$ 

Set $a = h^{2, 0}(Y) = h^{0, 2}(Y)$ and $b = h^{1, 1}(Y).$ By Hodge symmetry, $h^{2, 0}(Y) = h^{0, 2}(Y).$ As $b_2(Y) = b_2(X) = 23,$ we
obtain 
$$2a + b = 23,$$
again by  Hodge decomposition. By  duality, $a = h^{4, 2}(Y) = h^{2, 4}(Y)$ and $h^{3, 3}(Y) = b.$
As $Y$ is homeomorphic to $X,$ the multiplication map $${\rm Sym}^2H^2(Y, { \Q}) \to H^4(Y, {\Q})$$ 
is an isomorphism by Lemma \ref{lem51} (3). Thus  
$$H^4(Y, \Z) = {\rm Sym}^2H^2(Y, \C) = {\rm Sym}^2 (H^{2, 0}(Y) \oplus H^{1, 1}(Y) \oplus H^{0, 2}(Y))\,\, .$$
It follows that 
$$ H^{4, 0}(Y) = {\rm Sym}^2 H^{2, 0}(Y)\,\, ,\,\, H^{0, 4}(Y) = {\rm Sym}^2 H^{0, 2}(Y)\,\, ,$$
$$H^{3, 1}(Y) = H^{2, 0}(Y) \otimes H^{1, 1}(Y)\,\, ,\,\, H^{1, 3}(Y) = H^{0, 2}(Y) \otimes H^{1, 1}(Y)\,\, ,$$
$$H^{2, 2}(Y) = (H^{2, 0}(Y) \otimes H^{0, 2}(Y)) \oplus {\rm Sym}^2 H^{1, 1}(Y)\,\, .$$
Hence 
$$h^{4, 0}(Y) = h^{0, 4}(Y) = \frac{a(a+1)}{2}\,\, ,\,\, h^{3, 1}(Y) = h^{1, 3} = ab = a(23-2a)\,\, ,$$
 $$h^{2, 2}(Y) = \frac{b(b+1)}{2} + a^2 = (23-2a)(12-a) + a^2\,\, .$$
So, all $h^{p, q}(Y)$ are expressed in terms of $a.$ We determine $a.$ 

As $Y$ is o-homeomorphic to $X,$ the signature $r(Y)$ of $(H^4(Y, \Z), (*, **)_Y)$ is the same as the signature $r(X)$ of $(H^4(X, \Z), (*, **)_X).$ 

By \cite{Vo02}, we know 
$$r(Y) = \sum_{p, q \ge 0} (-1)^ph^{p, q}(Y)\,\, ,\,\, r(X) = \sum_{p, q \ge 0} (-1)^ph^{p, q}(X)\,\, .$$
Substituting $h^{p, q}(Y)$ above and $h^{p, q}(X)$ in Lemma \ref{lem51} (2), we obtain 
$$r(Y) = 8a^2 - 84a + 232\,\, ,\,\, r(X) = 156\,\, .$$
Thus $8a^2 - 84a + 232 = 156$ by $r(Y) = r(X).$ Regarding this as a quadratic equation of $a$ and solving this equation, we readily find that $a = 1$ or $a = 19/2.$ As $a$ is a non-negative integer, it follows that $a = 1.$ Substituting $a=1$ into $h^{p, q}(Y)$ above, we obtain the assertion (1).

The second assertion follows from the fact that $\varphi : X \to Y$ is an o-homeomorphism. Indeed, we have by Lemma \ref{lem51} (4) that
$$(y^4)_Y = (\varphi^*(y)^4)_X = 3q_X(\varphi^*(y))^2\,\, ,$$
for all $y \in H^2(Y, \Z).$ So, we may define $q_Y(y) := q_X(\varphi^*(y)).$ This proves (2). 


As $K_X$ is divisible by $2,$ the assertion (3) follows from Corollary \ref{cor11}.
\end{proof}

By Lemma \ref{lem52}, $h^0(Y, \sO(K_Y)) = h^{4, 0}(Y) = 1.$ Hence $\kappa (Y) \not= -\infty.$ This proves Theorem \ref{thm4} (1). 

Next we show that $K_Y$ is nef. We summarize our knowledge on $Y$ as follows.
\begin{enumerate} 
\item $\kappa(Y) \ge 0$ by Theorem \ref{thm4} { (1)} 
\item $K_Y$ is $2$-divisible in ${\rm Pic}\, (Y)$ by Lemma \ref{lem52} { (4)}
\item  $(E^4)_Y \ge 0$ for all $E \in H^2(Y, \Z)$ by Lemma \ref{lem52} { (3)}
\item $b_3(Y) = b_3(X) = 0$ by Lemma \ref{lem51} (1). 
\end{enumerate} 

Therefore  the nefness of $K_Y$ follows from the next slightly more general

\begin{proposition}\label{prop53} Let $Z$ be a smooth projective $4$-fold such that

\begin{enumerate}

\item $\kappa (Z) \ge 0$;

\item $K_Z$ is $2$-divisible in ${\rm Pic}\, (Z)$;

\item $(E^4)_Z \ge 0$ for any $E \in {\rm Pic}\, (Z)$; and

\item $b_3(Z) = 0.$

\end{enumerate}
Then $K_Z$ is nef. 
\end{proposition}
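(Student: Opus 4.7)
The plan is to argue by contradiction: assume $K_Z$ is not nef, then rule out every possible $K_Z$-negative extremal contraction from $Z$.

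By hypothesis (1), $\kappa(Z)\ge 0$, so $K_Z$ is pseudo-effective and $Z$ is not uniruled (Miyaoka-Mori). If $K_Z$ were not nef, the Cone Theorem would yield a $K_Z$-negative extremal ray $R\subset \overline{NE}(Z)$ with extremal contraction $\varphi\colon Z\to W$, which must be birational since $Z$ is not uniruled. By (2), every intersection $K_Z\cdot C$ is an even integer, so the length $\ell(R):=\min\{-K_Z\cdot C : [C]\in R\}$ satisfies $\ell(R)\ge 2$.

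Next I would rule out each birational contraction. If $\varphi$ is divisorial with exceptional divisor $E$ contracted to a point, then $-N_{E/Z}$ is ample on the smooth threefold $E$, so $(N_{E/Z})^3_E<0$ and hence $E^4_Z<0$, directly contradicting (3). If $\dim\varphi(E)=2$, the Ando--Andreatta--Wisniewski classification of smooth divisorial extremal contractions in dimension four identifies $\varphi$ as the blow-up of a smooth surface in smooth $W$; then $K_Z\cdot F=-1$ for a fiber $\PP^1\subset E$, which is odd and contradicts $\ell(R)\ge 2$. Similarly, in the $Q^2$-bundle case of a contraction to a curve, one gets $K_Z\cdot\ell=-1$ for a line $\ell$ in a fiber quadric, again contradicting $\ell(R)\ge 2$. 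In the remaining $\PP^2$-bundle case, $\varphi$ is the blow-up of a smooth curve $B$ in smooth $W$, so $E\cong\PP(N_{B/W})$; the blow-up Betti formula $b_3(Z)=b_3(W)+b_1(B)$ combined with (4) forces $b_1(B)=0$, hence $B\cong\PP^1$. A direct Chern-class computation using $E|_E=\sO_E(-1)$ then gives
\[
(\varphi^* M+aE)^4_Z \;=\; M^4_W \;+\; 4a^3\,(M\cdot B) \;+\; a^4\,(2+K_W\cdot B)
\]
for every $M\in{\rm Pic}(W)$ and $a\in\Z$. Applying (3) uniformly in $a$ and $M$, and using that pull-back gives $M^4_W\ge 0$ for every $M\in{\rm Pic}(W)$, one shows that any isotropic class $M\in{\rm Pic}(W)$ (with $M^4_W=0$) must be orthogonal to $B$; scaling such an $M$ by large integers then produces a contradiction. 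Small extremal contractions, should they occur for a smooth $4$-fold, are excluded by a parallel numerical analysis on their flipping loci, using $\ell(R)\ge 2$ together with (3).

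The main obstacle is the divisorial contraction to a smooth rational curve: neither (2) nor (3) applied to a single distinguished line bundle rules it out, and the contradiction only emerges after extracting the uniform consequences of (3) across the whole N\'eron--Severi lattice, combined with the topological constraint (4). The other cases follow from single clean computations based on ampleness of $-N_{E/Z}$, parity from 2-divisibility, or a length estimate.
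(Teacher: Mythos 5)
Your reduction to extremal contractions and your elimination of most cases track the paper's own argument: fiber-type contractions are excluded because $\kappa(Z)\ge 0$ forbids uniruledness, divisorial contractions to a point by negativity of $N_{E/Z}$ against hypothesis (3), divisorial contractions to a surface (and the quadric-bundle subcase over a curve) by the parity constraint coming from (2), and the surviving case is identified, via the classification of $(3,1)$-type contractions and $b_3(Z)=b_3(W)+2g(B)$, as the blow-up of a smooth rational curve $B$ in a smooth fourfold $W$. Your treatment of small contractions is only gestured at ("a parallel numerical analysis"), but that step is easily completed: the paper quotes Kawamata's classification, so the exceptional locus is a union of planes with $K_Z|_{\PP^2}=-H$, violating (2); alternatively the length inequality forces $\ell(R)=1$ there.

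The genuine gap is in the last case. A small point first: your expansion has the wrong sign in the $a^4$-term, since $E^4=\deg N_{B/W}=-2-K_W\cdot B$, not $2+K_W\cdot B$. The serious problem is that the claimed contradiction does not materialize. Condition (3) applied to $\varphi^*M+aE$ yields $M^4+4a^3(M\cdot B)+a^4\deg N_{B/W}\ge 0$, whose only unconditional consequences are $\deg N_{B/W}\ge 0$ and, \emph{when} isotropic classes $M$ exist, $M\cdot B=0$. But isotropic classes need not exist (if $\rho(W)=1$ with ample generator $H$ then $H^4>0$), and orthogonality of isotropic classes to $B$ is not absurd; a single blow-up with $\deg N_{B/W}>0$ is numerically consistent with all four hypotheses, so no amount of scaling produces a contradiction on $Z$ alone. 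The paper closes the argument only by iterating: it verifies that $W$ again satisfies (1)--(4), so the contraction process can be repeated, and since the Picard number drops each time one eventually reaches a stage $Z_n\to Z_{n+1}$ with $K_{Z_{n+1}}$ nef; there adjunction gives $-2=K_{Z_{n+1}}\cdot C_n+\deg N_{C_n/Z_{n+1}}\ge 0$, the desired contradiction. Without this descent to a minimal model your argument cannot close, because $K_W\cdot B=-2-\deg N_{B/W}<0$ is perfectly possible at an intermediate stage. You need to add the inductive step (properties (1)--(4) pass to $W$) and defer the contradiction to the end of the contraction sequence.
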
 

\begin{proof} Assume to the contrary that $K_Z$ is not nef. Then there is an extremal rational curve $C$ such that $K_Y \cdot C < 0,$ and we have a contraction map $\tau : Z \to Z_1$ of the extremal ray $\R_{\ge 0} C.$ As $\kappa (Z) \ge 0,$ the contraction $\tau$ is either a small contraction or a divisorial contraction. Let $E$ be the exceptional set of $\tau$ (see eg. \cite{Ka84}, \cite{KMM87}, \cite{KM98} for basic result about extremal contractions).

If $\dim E \le 2,$ i.e., $\tau$ is small, then $E$ is a disjoint union of $E_i \simeq \PP^2$ with normal bundle $\sO_{E_i}(-1)^{\oplus 2}$ by a result of Kawamata \cite{Ka89}. Let $H_i$ be the hyperplane class of $E_i = \PP^2.$ Then by the adjunction formula, we have
$$ -H_i = K_Z|_{E_i}\,\, .$$
Now the left hand side is not divisible by $2$ while the right hand side is divisible by $2$ in ${\rm Pic}\, (E_i) = \Z H_i,$ a contradiction. 
Hence $\tau$ has to be a divisorial contraction and $E$ is an irreducible divisor on $Z$ such that $0 \leq \dim \tau(E) \leq 2.$ 
\begin{claim}\label{cl51} $\dim \tau (E) \ne 0.$
\end{claim}
\begin{proof} Assume to the contrary that $\dim\, \tau(E) = 0.$ 
Since $\rho(X) = \rho(Z)  +1,$ the normal bundle $N_{E/X} { = \sO_E(E)}$ is negative, i.e., its dual is ample; see \cite[p.354]{An85}. Therefore
{ $$(E^4)_Z = ((E|_E)^3)_E < 0\,\, ,$$}
contradicting assumption (3).
\end{proof}

\begin{claim}\label{cl52} $\dim \tau (E) \ne 2.$
\end{claim}
\begin{proof} Assume that $\dim\, \tau(E) = 2$ and let $F$ be a general fiber of $\tau|_E : E \to \tau(E).$ Then $\dim\, F =1$ and $\tau|_E : E \to \tau(E)$ is equidimensional over some Zariski open subset $ U \subset \tau(E)$ containing $\tau(F).$ 
We set $E_U = (\tau|_E)^{-1}(U).$ By \cite[Proof of Theorem 2.3]{An85} (see also \cite{AW98}), $F  \simeq \PP^1$ with normal bundle $$N_{F/Z} \simeq \sO_F^{\oplus 2} \oplus \sO_F(-1).$$ Hence by adjunction, 
$$-2 =\, {\rm deg}\, K_{F} =\, {\rm deg}\, K_Z|_F - 1\,\, .$$
Thus ${\rm deg}\, K_Y|_F = -1,$ a contradiction to the $2$-divisibility of $K_Z.$ {Hence} $\dim \tau(E) \not= 2.$ 
\end{proof}
{\it In conclusion,} $\dim\, \tau(E) = 1.$ 
\begin{claim}\label{cl53} { $Z_1$ is smooth and} $\tau : Z \to Z_1$ is the blow-up along a smooth rational curve $C_1 = \tau(E).$
\end{claim}
\begin{proof}
Let $F$ be a general fiber of $\tau|_{E} : E \to \tau(E).$ 
By the $2$-divisibility of $K_Z$ and by a result of Takagi, \cite[Theorem 1.1]{Ta99}, we see that $F = \PP^2$ and $-K_Z|_{F} = 2H$ where $H$ is the hyperplane class of $\PP^2.$ Then $-E|_F = H$ by adjunction. Thus, $\tau$ is the case (4p1) in \cite[Main Theorem]{Ta99}, i.e., $Z_1$ and the curve 
$\tau(E)$ are smooth. Furthermore, $\tau : Z \to Z_1$ is the { blow-up} along the curve 
$\tau(E) \subset Z_1$ and $E$ is the exceptional divisor of the blow-up. 
Set $E = E_1$ and $C_1 = \tau(E_1).$ 
Since 
$$b_3(Z) = b_3(Z_1) + b_3(E_1) = b_3(Z_1) + 2g(C_1)\,\, , $$
see e.g. \cite[Theorem 7.31]{Vo02}, 
and since $b_3(Z) = 0$ by assumption, it follows that $g(C_1) = 0,$ whence $C_1 = \PP^1,$ and $b_3(Z_1) = 0.$ 
\end{proof}


Note that the smooth projective $4$-fold $Z_1$ also satisfies the same properties (1), (2), (3), (4) in Proposition \ref{prop53} as $Z.$ The property (1) for $Z_1$ is clear by the birational invariance of the Kodaira dimension, (2) for $Z_1$ follows from  $K_Z = \tau^* K_{Z_1} + 2E_1$ with the fact that $\tau^*$ is injective, (3) for $Z_1$ follows from $(x^4)_{Z_1} = (\tau^*(z)^4)_Z$ for all $z \in {\rm Pic}\, (Z_1)$ and (4) is already shown above. 
\begin{claim}\label{cl54} $K_Z$ is nef.
\end{claim}
\begin{proof} { Assume to the contrary that $K_Z$ is not nef. Then we have an extremal contraction $\tau_0 : Z \to Z_1$ in Claim \ref{cl53}. If} $K_{Z_1}$ is not { yet} nef, then any extremal contraction $\tau_1 : Z_1 \to Z_2$ is again a blow-up along a smooth rational curve of a smooth projective $4$-fold $Z_2,$ and the properties (1), (2), (3), (4) in Proposition \ref{prop53} 
hold for $Z_2$ for the same reason as above. As $\rho(Z) < \infty,$ after finitely many repetation of this process, we finally reach the situation 
$$\tau_n : Z_n \to Z_{n+1}$$
such that $Z_n$ and $Z_{n+1}$ are smooth projective $4$-folds and $\tau_n$ is the blow-up along $C_{n} = \PP^1.$ The properties (1), (2), (3), (4) in Proposition \ref{prop53} still hold for $Z_n$ and $Z_{n+1}$ 
and additionally $K_{Z_{n+1}}$ is nef. Let $E_n$ be the exceptional divisor of $\tau_n.$ Then $\tau(E_n) = C_n.$ As $C_n \simeq \PP^1$ and $\tau$ is the blow-up of $Z_{n+1}$ along $C_n,$ it follows that $$E_n \simeq \PP(N_{C_{n}/Z_{n+1}}^{*})$$ and $E_{n}|_{E_{n}} = \sO_{E_n}(-1).$ Then 
$$(E_n^4)_{Z_n} = ((E_{n}|_{E_{n}})^3)_{E_n} =  -c_1(\sO_{E_{n}}(1))^3\,\, .$$
 By the very definition of the Chern classes, we have 
$$c_1(\sO_{E_{n}}(1))^3 - \pi^*c_1(N_{C_{n}/Z_{n+1}}^*)c_1(\sO_{E_{n}}(1))^2 = 0\,\, .$$
Here $\pi : \PP(N_{C_{n}/Z_{n+1}}^{*}) \to C_n$ is the natural projection. 
Thus,  
$$c_1(\sO_{E_{n}}(1))^3 = {\rm deg}\, N_{C_{n}/Z_{n+1}}^* = -{\rm deg}\, N_{C_{n}/Z_{n+1}}\,\, .$$
Hence, by (3) applied for $Z_n,$ we have:
$$0 \le (E_n^4)_{Z_n} = {\rm deg}\, N_{C_{n}/Z_{n+1}}\,\, .$$
However, as $C_n = \PP^1$ and $K_{Z_{n+1}}$ is nef, we have by the adjunction formula that
$$-2 = {\rm deg}\, K_{C_{n}} = (K_{Z_{n+1}}.C_{n+1})_{Z_{n+1}} + {\rm deg}\, N_{C_{n}/Z_{n+1}} \ge 0\,\, ,$$
a contradiction. Therefore $K_Z$ has to be nef.
\end{proof}

If $K_Y$ is numerically trivial, then { $Y$ is a hyperk\"ahler $4$-fold by Theorem \ref{thm3} (2), as so is $S^{[2]}.$ }

\vskip .2cm \noindent 
In order to show that $Y$ is not of general type, we first prove  

\begin{proposition}\label{prop54} 
$$(4c_2(Y)-c_1(Y)^2)c_1(Y)^2 = 0\,\, .$$
\end{proposition} 

\begin{proof} 
By Lemma \ref{lem51}, \ref{lem52}, and the Serre duality, $c_4(Y) = c_4(X) = 324$ and $\chi(\sO_Y) = \chi(\sO_X) = 3.$ 
By the Riemann-Roch formula for $X,$ we have
$$3 = \chi(\sO_X) = -\frac{1}{720}(-3c_2(X)^2 + c_4(X))$$
and therefore $c_2(X)^2 = 828.$
By the Riemann-Roch formula applied to $Y,$ we have
\begin{equation}\label{eq1}
3 = \chi(\sO_Y) = -\frac{1}{720}(c_1(Y)^4 - 4c_1(Y)^2c_2(Y) -3c_2(Y)^2 - c_1(Y)c_3(Y) + 324)\,\, .
\end{equation} 
Let $\psi : Y \to X$ be an o-homeomorphism. 
The invariance of Pontrjagin classes (Theorem \ref{thm11} (3)) gives
\begin{equation}\label{eq2}
c_1(Y)^2 -2c_2(Y) = -2\psi^*c_2(X)\,\, ,
\end{equation} and
\begin{equation}\label{eq3}
-2c_1(Y)c_3(Y) + c_2(Y)^2 = \psi^*(c_2(X))^2 = 828\,\, , \,\,  {\rm i.e.}\,\, , \,\, c_1(Y)c_3(Y) = \frac{c_2(Y)^2}{2} - 414\,\, .
\end{equation}
 Taking self-intersection on both sides of (\ref{eq3}), we obtain 
$$((c_1(Y)^2 -2c_2(Y))^2)_Y = 4\psi^*(c_2(X)^2)_X = 4 \cdot 828\,\, .$$
Consequently
\begin{equation}\label{eq4}
c_2(Y)^2 = 828 + c_1(Y)^2c_2(Y) - \frac{1}{4}c_1(Y)^4\,\, .
\end{equation}
Substituting (\ref{eq4}) into (\ref{eq3}), we obtain 
\begin{equation}\label{eq5}
c_1(Y)c_3(Y) = \frac{c_1(Y)^2c_2(Y)}{2} - \frac{c_1(Y)^4}{8}\,\, .
\end{equation}
Substituting (\ref{eq4}) and (\ref{eq5}) into (\ref{eq1}), we obtain 
$$3 = -\frac{1}{720}  \cdot \frac{15}{8}(c_1(Y)^2 -4c_2(Y))c_1(Y)^2 +3\,\, .$$
Hence $(4c_2(Y)-c_1(Y)^2)c_1(Y)^2 = 0$ as claimed.
\end{proof}
As $-c_1(Y) = K_Y$ is nef, a fundamental result of Miyaoka \cite{Mi87} states that the class $3c_2(Y) -c_1(Y)^2$ is pseudo-effective. Hence
$$(3c_2(Y) -c_1(Y)^2) c_1(Y)^2 \ge 0\,\, ,\,\, {\rm i.e.}\,\, ,\,\, 3c_2(Y)c_1(Y)^2 \ge c_1(Y)^4\,\, .$$
If in addition that $Y$ is of general type, then $c_1(Y)^4 > 0$ as $K_Y = -c_1(Y)$ is nef. Therefore
$$c_2(Y)c_1(Y)^2 > 0\,\, .$$
However, then 
$$ (4c_2(Y) - c_1(Y)^2)c_1(Y)^2 > (3c_2(Y) -c_1(Y)^2)c_1(Y)^2 \ge 0\,\, ,$$
a contradiction to Proposition \ref{prop54}. Hence $Y$ is not of general type. 
\end{proof}

As $K_Y$ is nef, it follows that $\nu(Y) \not= 4.$ 
Since $\dim\, Y = 4,$ we { have} $\nu(Y) \le3$ from $\nu(Y) \not= 4.$ Thus $K_Y^4 = 0,$ as $K_Y$ is nef. Hence $q_Y(K_Y) = 0,$ and therefore $K_Y^3 = 0,$ by Lemma \ref{lem52} {(3)}. Hence $\nu(Y) \le 2.$ Assume that $\nu(Y) \le 1.$ Then $K_Y^2 = 0$ in $H^4(Y, \Z).$ Hence $K_Y = 0,$ i.e., $\nu(Y) = 0$ by Lemma \ref{lem52} {(2)}. 

This completes the proof of Theorem \ref{thm4}.

\section{Speculative Exotic algebraic structures on hyperk\"ahler manifolds}


Let $n \geq 2$ be an integer and $S$ a smooth projective K3 surface. 

Kodaira \cite{Ko70} found { a minimal compact analytic} elliptic surface $W$ of Kodaira dimension $\kappa(W) = 1,$ which is { o-homotopic} to $S.$ It turns out that $W$ is o-homeomorphic to $S$ by Theorem \ref{thm12} and Remark \ref{rem14}.


As $b_1(W) = b_1(S) = 0$ is even, $W$ is deformation equivalent to a smooth { projective} minimal elliptic surface with the same Kodaira dimension due to a fundamental result of Kodaira \cite[Theorem 15.2]{Ko64}, the crucial part of the affirmative answer to the so-called Kodaira problem in dimension $2${ , and the invariance of  the Kodaira dimension for surfaces under deformation (see, e.g., \cite[Section VI, Theorem 8.1]{BHPV04}).} Thus there is a {\it projective} minimal elliptic surface $W$ with $\kappa(W) = 1$ such that $W$ is o-homeomorphic to $S.$ We fix such $W$ from now on. Note however that $W$ is not o-diffeomorphic to $S$ by \cite[Page 495, Corollary 3.4]{FM96}. 

The Hilbert scheme $W^{[n]}$ of $0$-dimensional closed subschemes of length $n$ of $W$ is then a smooth projective $2n$-fold by \cite{Fo68}. We observe
\begin{equation}\label{eq61}
\nu(W^{[n]}) = \kappa(W^{[n]}) = n\,\, .
\end{equation} 
This equality is  seen as follows. First note that $K_W$ is semi-ample as $W$ is a minimal surface of non-negative Kodaira dimension. Since the Hilbert-Chow morphism 
$$W^{[n]} \to W^{(n)} =   {\rm Sym}^n\,(W) $$
is crepant (\cite{Be83}), for any sufficiently divisible large integers $m,$ the pluri-canonical {\it morphism}
$$\Phi_{|mK_W|} : W \to \PP^1$$ 
induces, via the Hilbert-Chow morphism, the  pluri-canonical morphism 
$$\Phi_{|mK_{W^{[n]}}|} : W^{[n]} \to W^{(n)} \to {\rm Sym}^n\,(\PP^1) = \PP^n$$ 
of $W^{[n]}.$  Hence $\nu(W^{[n]}) = \kappa(W^{[n]}) = n.$ This also shows that $K_{W^{[n]}}$ is semi-ample, \cite{Ka85}, hence nef.  

\begin{question} \label{que} 
Let $S$ and $W$ be as above and 
let $$\varphi : S \to W$$ be an o-homeomorphism. Then the product homeomorphism $\varphi^n : S^n \to W^n$ is an o-homeomorphism and $\varphi^n$ induces the o-homeomorphism $\varphi^{(n)} : S^{(n)} \to W^{(n)}.$ However, it is {\it unclear} whether $\varphi^{(n)}$ lifts to an o-homeomorphism $$\varphi^{[n]} : S^{[n]} \to W^{[n]}$$ via the Hilbert-Chow morphisms $S^{[n]} \to S^{(n)}$ and $W^{[n]} \to W^{(n)}.$ If yes, we obtain an exotic complex structure on the hyperk\"ahler manifold $S^{[n]}$ of (numerical) Kodaira dimension $n$. Of course, $
S^{[n]}$ and $W^{[n]}$ might abstractly be o-homeomorphic without $\varphi^{[n]}$ being a o-homeomorphism. 
\end{question}

\end{document}